\documentclass[twoside,11pt]{article}
\usepackage{graphicx,amscd,amsfonts,amsmath,amsthm,amssymb,latexsym,amsfonts,color,extsizes,multirow,algorithm,algorithmic,mathtools}
\usepackage{epsfig,float,epstopdf,array,bm,cite,cases,array,multirow,relsize,graphicx,float,booktabs}
\usepackage{mathrsfs}
\usepackage{booktabs}
\usepackage{geometry}
\usepackage{array}
\usepackage{siunitx}
\usepackage{caption}
\usepackage{booktabs}   
\usepackage[bookmarksnumbered,colorlinks,plainpages]{hyperref}
\hypersetup{citecolor=blue}
\usepackage[table]{xcolor}

\newtheorem{theorem}{\bf Theorem}

\newtheorem{example}{Example}
\newtheorem{remark}{Remark}

\begin{document}

	\title{\bf  \huge{
  A variant of the block preconditioner for  indefinite complex symmetric linear systems
}}
\author{\bf Mehdi Makhdomi$^{\dag}$ and Davod Khojasteh Salkuyeh$^{\ddag}$\thanks {\noindent Corresponding author.}~
\\	
{ $^{\dag}$ \textit{Department of Mathematics, University of Kurdistan, P.O. Box 416, Sanandaj,  Iran}} \\
{ $^{\ddag}$ \textit{Faculty of Mathematical Sciences, University of Guilan, Rasht, Iran}} \\
{ Emails: mehdi.ap.math@gmail.com,  khojasteh@guilan.ac.ir, }}

\date{}
\maketitle
\vspace{-0.5cm}
\noindent\hrulefill\\
{\bf Abstract.} 
In this paper, we propose an efficient preconditioner for solving indefinite complex symmetric linear systems within a block preconditioning framework. We analyze the convergence of the corresponding iterative method and investigate several spectral properties of the preconditioned matrix, including eigenvalue distributions and eigenvector structures.  The new preconditioner is used to accelerate the convergence of the flexible version of GMRES. Numerical experiments are presented to illustrate the effectiveness of the proposed preconditioner, and comparisons with existing block preconditioners demonstrate its superior performance.\\
	\noindent{\it \footnotesize \textbf{Keywords}}: {\small Complex, symmetric, linear systems, Convergence analysis, preconditioner, GMRES.}\\
	\noindent
	\noindent{\it \footnotesize AMS Subject Classification}: 65F10, 65F50, 65F08.. \\
	
	\noindent\hrulefill\\
	
		\pagestyle{myheadings}\markboth{M. Makhdomi, D. K. Salkuyeh}{ }
	\thispagestyle{empty}
\section{Introduction}\label{s.1}
\medskip
 This study is concerned with computing  an approximate solution to the system of linear equations 
 \begin{equation}\label{EQ1}
 \mathscr{A}\mathbf{x}\equiv(W + iT)\mathbf{x} = \mathbf{d}, \quad \mathbf{x} = u + iv, \quad \mathbf{d} = f + ig, 
 \end{equation}
 where \( \mathscr{A} = W + iT \in \mathbb{C}^{n \times n} \) is complex symmetric (\( W, T \in \mathbb{R}^{n \times n} \) are symmetric), \( u, v, f, g \in \mathbb{R}^n \) and \( i = \sqrt{-1} \) denotes the imaginary unit. The linear system~\eqref{EQ1} appears frequently in numerous scientific and engineering contexts, such as molecular scattering~\cite{Poirier2000}, structural dynamics~\cite{Feriani2000}. Additional examples may be found in~\cite{Benzi2008,Wu2015} and the references cited therein.
 
Assuming that the matrix $\mathscr{A}$ is large and sparse, iterative techniques are generally preferred over direct solvers, such as Gaussian elimination, for solving systems of the form \eqref{EQ1}. When the matrices $W$ and $T$ are positive semidefinite, with at least one of them being positive definite, several iterative methods and preconditioning techniques have been developed for systems such as~\eqref{EQ1}.
 
 In 2000, Axelsson and Kucherov  proposed  the C-to-R method~\cite{AxelssonKucherov2000}.
   In 2008, Bai~\cite{Bai2008} introduced several splitting for non-Hermitian linear systems. Bai et al.~\cite{BaiBenziChen2010} in 2010, proposed the modified Hermitian and skew-Hermitian splitting (MHSS) iteration method, which bypasses the need to solve a shifted skew-Hermitian linear system at each iteration. Following this, in 2011, a preconditioned variant of MHSS, known as the PMHSS method, was introduced by Bai et al.~\cite{BaiBenziChen2011}. This variant has been demonstrated to achieve remarkably superior computational efficiency compared to its predecessor.
  In 2015, Wu~\cite{Wu2015} proposed several variants of the  HSS method for solving a class of complex symmetric linear systems.
    In 2015, Salkuyeh et al. \cite{SalkuyehHezariEdalatpour2015} proposed the generalized SOR iterative method for solving a class of complex symmetric linear systems, while in the same year, Hezari et al. \cite{HezariSalkuyehEdalatpour2015} developed the preconditioned GSOR iterative method.
  To enhance the convergence rate of the GSOR method, Edalatpour et al.~\cite{Edalatpour2015} proposed an accelerated variant, termed AGSOR, which incorporates two acceleration parameters.
  In 2016, Hezari et al.~\cite{HezariSalkuyehEdalatpour2016} proposed a new iterative method for solving a class of complex symmetric linear systems. In 2017, Wang et al.~\cite{WangZhengLu2017} proposed the combination method of real and imaginary parts (CRI) for solving complex symmetric linear systems.
  In 2018, Li et al.~\cite{Li2018} constructed a symmetric block triangular splitting (SBTS) iteration method based on two distinct splittings.
   Later, in 2018, Axelsson and Salkuyeh~\cite{Axelsson2018} presented the transformed matrix preconditioner (TMP) approach.

 In order to circumvent the need for complex arithmetic, a widely adopted technique is to transform the original complex linear system~\eqref{EQ1} into an equivalent real block system of dimension $2 \times 2$. By employing the real-equivalent reformulation described in~\cite{Benzi2008Block}, we decompose the complex solution and right-hand side vectors as $\mathbf{x} = u + iv$ and $\mathbf{d} = f + ig$, respectively. Consequently, the system can be expressed in the following real block form
 
 \begin{equation}\label{orginal}
 \mathscr{A} \mathbf{x} \equiv
 \begin{bmatrix}
 T & -W \\
 W & T
 \end{bmatrix}
 \begin{bmatrix}
 u \\
 -v
 \end{bmatrix}
 =
 \begin{bmatrix}
 g \\
 f
 \end{bmatrix}
 \equiv \mathbf{d},
 \tag{2}
 \end{equation}
 From now on, we assume that $W \in \mathbb{R}^{n \times n}$ is symmetric indefinite and $T \in \mathbb{R}^{n \times n}$ is symmetric positive definite.
 
 Due to the high dimensionality and sparse nature of the coefficient matrix $\mathscr{A}$ in the $2 \times 2$ block linear system~\eqref{orginal}, applying direct factorization methods typically incurs severe fill-in. This not only imposes prohibitive memory requirements but also significantly degrades computational efficiency, rendering direct solvers impractical for large-scale, real-world applications. To address these limitations, extensive research has focused on developing iterative schemes that preserve and exploit the system's sparsity. Among these, Krylov subspace methods—most notably the generalized minimal residual (GMRES) algorithm~\cite{Saadgmres} and its flexible variant (FGMRES)~\cite{Saadfgmres}—have proven highly effective, as they rely solely on matrix-vector multiplications, thereby preserving the original sparsity pattern.
 
 In the following, we review some of the existing research in this area, particularly for the case where the matrix \(W \) is symmetric indefinite and \(T\) is SPD, 
 	for solving complex symmetric linear systems~\eqref{orginal}, numerous iterative methods have been proposed in the literature, encompassing both stationary iteration schemes and Krylov subspace methods, the latter often being equipped with preconditioners to enhance convergence.
  In 2017, Li and Wu~\cite{WuLi2017} introduced the modified positive/negative stable splitting (MPNS) method. In 2021, Pourbagher and Salkuyeh~\cite{PourbagherSalkuyeh2021} put forward the symmetric positive definite and negative stable splitting (SNSS) method. In 2022, Axelsson et al.~\cite{AxelssonPourbagherSalkuyeh2022} developed three iteration schemes for system~\eqref{EQ1} under the assumption that \(W = W_1 - W_2\), with both \(W_1\) and \(W_2\) being SPD. They also employed the resulting preconditioners to accelerate the convergence of GMRES when applied to~\eqref{EQ1}. Also in 2022, Zheng et al.~\cite{ZhengZengZhang2022} proposed the VPMHSS method.
 		
 		More recently, in 2025, Chen and Wu~\cite{ChenWu2025} proposed two efficient iteration methods for solving complex symmetric indefinite linear systems. In the same year, Liang and Dou~\cite{LiangDou2025} proposed modified CRI (combination of real and imaginary parts) iteration methods. Also in 2025, Salkuyeh~\cite{Salkuyeh2025} proposed a preconditioner for complex symmetric systems of linear equations with an indefinite Hermitian part.
 	
Given that in this study, the matrices $T$ and $W$ are symmetric positive definite and symmetric indefinite, respectively, we introduce some effective methods that share this structure in detail.

It is well established that the convergence and performance of Krylov subspace methods depend heavily on the quality of the preconditioner. Consequently, a substantial body of literature has been dedicated to designing and analyzing efficient preconditioning techniques. Specifically, building upon the HSS iteration framework introduced by Bai et al.~\cite{Bai2003}, various adapted HSS-based iteration schemes have been developed for the real $2 \times 2$ block system~\eqref{orginal}, formulated as follows
  \begin{equation}
  \mathscr{P}_{\text{HSS}} = \frac{1}{2\alpha}
  \begin{bmatrix}
  \alpha I + T & 0 \\
  0 & \alpha I + T
  \end{bmatrix}
  \begin{bmatrix}
  \alpha I - W & W \\
  W & \alpha I
  \end{bmatrix}
  = \frac{1}{2}
  \begin{bmatrix}
  \alpha I + T & \left(I + \frac{1}{\alpha}T\right)W \\[6pt]
  \left(I + \frac{1}{\alpha}T\right)W & \alpha I + T
  \end{bmatrix}.
  \label{eq:HSS_prec}
  \end{equation}
  
  Subsequently, Zhang and Dai~\cite{ZhangDai2017} employed a relaxation strategy to construct the block splitting (BS) preconditioner, formulated as
  \begin{equation}\label{BS}
  \mathscr{P}_{\text{BS}} =
  \begin{bmatrix}
  I & -W \\
  \frac{1}{\alpha}W & T
  \end{bmatrix}
  \begin{bmatrix}
  \alpha I + T & 0 \\
  0 & I
  \end{bmatrix}
  =
  \begin{bmatrix}
  \alpha I + T & -W \\[4pt]
  W\left(I + \frac{1}{\alpha}T\right) & T
  \end{bmatrix}.
  \end{equation}
  
  Building upon the HSS preconditioning framework and incorporating relaxation techniques, Shen and Shi~\cite{ShenShi2018} introduced a variant of the HSS preconditioner (VHSS), defined by
  \begin{equation}\label{VHSS}
  \mathscr{P}_{\text{VHSS}} = \frac{1}{2\alpha}
  \begin{bmatrix}
  \alpha I + T & 0 \\
  0 & 2\alpha I
  \end{bmatrix}
  \begin{bmatrix}
  \alpha I - W & W \\
  W & T
  \end{bmatrix}
  = \frac{1}{2}
  \begin{bmatrix}
  \alpha I + T & \left(I + \frac{1}{\alpha}T\right)W \\[6pt]
  2W & 2T
  \end{bmatrix}.
  \end{equation}
  
  More recently, Balani and Hajarian~\cite{BalaniHajarian2023} proposed a modified block product (MBP) preconditioner of the form
  \begin{equation}\label{MBP}
  \mathscr{P}_{\text{MBP}} =
  \begin{bmatrix}
  T & 0 \\
  W & \alpha T
  \end{bmatrix}
  \begin{bmatrix}
  I & -\frac{1}{\alpha}W \\
  0 & I
  \end{bmatrix}
  =
  \begin{bmatrix}
  T & -\frac{1}{\alpha}TW \\[4pt]
  W & \alpha T - \frac{1}{\alpha}W^2
  \end{bmatrix}.
  \end{equation}
  
  In this paper, we propose a variant of the block preconditioner for complex symmetric systems, which we refer to as the VBP preconditioner. This new preconditioner is derived from a matrix splitting approach and is designed for solving the large sparse $2 \times 2$ block complex symmetric indefinite linear system of the form \eqref{orginal}.
  
  Throughout this paper, $ \rho(A) $, $ \textrm{tr}(A) $, $ \Vert A\Vert_{2} $ and $ \Vert A\Vert _{F} $ denote the spectral radius, trace, the Euclidean norm and the Frobenius norm of matrix $ A $, respectively.  $ \lambda_{\max}(A) $ and $  \lambda_{\min}(A) $  denote the largest eigenvalue and the smallest eigenvalue of matrix $ A  $, respectively.
 For a vector $ x \in \mathbb{C}^{n} $, $ x^{\ast} $ is used for the conjugate transpose of $ x $. For two vectors $ x $ and $ y $, the 
 \textsc{Matlab} notation $ (x;y) $ is used for $(x^{T}, y^{T})^{T}$ (or for $(x^{\ast},
 y^{\ast})^{\ast})$. 
 
 The remainder of this paper is organized as follows. In Section~\ref{Sec2}, we present the detailed construction of the proposed VBP preconditioner and analyze the convergence properties of the associated iterative method. Practical implementation aspects, including computational complexity and storage requirements, are also discussed in this section. Section~\ref{Sec3} is devoted to a thorough investigation of the spectral properties of the preconditioned matrix $\mathscr{P}_{\text{VBP}}^{-1}\mathcal{A}$, providing theoretical insights into the convergence behavior. The strategy for selecting the optimal preconditioner parameters $\alpha$ and $ \gamma $ is described in Section~\ref{Sec4}. In Section~\ref{Sec5}, we conduct extensive numerical experiments to evaluate the performance, efficiency, and robustness of the proposed approach in comparison with existing preconditioners such as VHSS, BS, and MBP. Finally, Section~\ref{Sec6} concludes the paper with some concluding remarks and discusses possible directions for future research.
 

\section{The VBP preconditioner and convergence analysis}\label{Sec2}
In this section, we introduce a variant of the block preconditioner (denoted as VBP) for complex symmetric systems. Let $\alpha > 0$ and $ \gamma $ be a positive real parameters. Using the $2 \times 2$ block matrix $\mathscr{A}$ given in~\eqref{orginal}, we construct the preconditioner block approach as follows
\begin{equation}\label{precondi}
\mathscr{P}_{\text{VBP}} = 
\begin{bmatrix} 
T+\alpha I & -\gamma W \\ 
0 & I 
\end{bmatrix} 
\begin{bmatrix} 
I & 0 \\ 
W & T 
\end{bmatrix} 
= 
\begin{bmatrix} 
T+\alpha I-\gamma W^2 & -\gamma WT \\ 
W & T 
\end{bmatrix}.
\end{equation}

This preconditioner, $\mathscr{P}_{\text{VBP}}$, may alternatively be formulated via the splitting approach
\begin{equation}\label{EQA}
\mathscr{A} = \mathscr{P}_{\text{VBP}} - \mathscr{R}_{\text{VBP}},
\end{equation}
where the remainder matrix is given by
\begin{equation}\label{w1}
\mathscr{R}_{\text{VBP}} = \mathscr{P}_{\text{MBP}} - \mathscr{A} = 
\begin{bmatrix} 
\alpha I-\gamma W^2 & W -\gamma WT \\ 
0 & 0 
\end{bmatrix}.
\end{equation}

The selection strategy for these parameters will be discussed in Section~\ref{Sec4}.

Utilizing the splitting established in~\eqref{EQA} for our coefficient matrix $\mathscr{A}$, we can introduce the subsequent VBP iterative scheme to solve the $2 \times 2$ block linear system~\eqref{orginal}

\vspace{0.5cm}
\noindent \textbf{Method 2.1 (VBP Iteration Scheme):} Given an initial vector $(u^{(0)}; v^{(0)}) \in \mathbb{R}^{2n}$, the proposed VBP method produces a sequence of approximate solutions $\{(u^{(k)}; v^{(k)})\}$ utilizing the iteration
\begin{equation}\label{iter}
\mathscr{P}_{\text{VBP}} 
\begin{bmatrix} 
u^{(k+1)} \\ 
v^{(k+1)} 
\end{bmatrix} 
= 
\mathscr{R}_{\text{VBP}}
\begin{bmatrix} 
u^{(k)} \\ 
v^{(k)} 
\end{bmatrix} 
+ 
\begin{bmatrix} 
g \\ 
f 
\end{bmatrix}.
\end{equation}

We can rewrite the iteration presented in~\eqref{iter} into a more compact form
\begin{equation}\label{eqs}
\mathbf{u}^{(k+1)} = \mathscr{L}_{\text{VBP}} \mathbf{u}^{(k)} + c,
\end{equation}
in which the iteration matrix is defined as
\begin{equation}
\mathscr{L}_{\text{VBP}} = \mathscr{P}_{\text{VBP}}^{-1} \mathscr{R}_{\text{VBP}} = 
\begin{bmatrix} 
T+\alpha I-\gamma W^2 & -\gamma WT \\ 
W & T 
\end{bmatrix}^{-1} 
\begin{bmatrix} 
\alpha I-\gamma W^2 & W -\gamma WT \\ 
0 & 0 
\end{bmatrix},
\end{equation}
and the constant vector is $c = \mathscr{P}_{\text{VBP}}^{-1} \mathbf{d}$.

To wrap up this section, we examine how the $\mathscr{P}_{\mathrm{VPT}}$ preconditioner is implemented within the framework of Krylov subspace techniques to solve the linear system $\mathscr{A}\mathbf{x} = \mathbf{d}$. 
At each iteration of a Krylov method such as GMRES or its flexible variant FGMRES, it is necessary to evaluate the inverse action of the preconditioning matrix. This translates to finding the solution for a linear equation structured as
$$ \mathscr{P}_{\text{VBP}}z = r, $$ 
in which the vectors are partitioned as $z = (z_{1}; z_{2} )$ and $r = (r_{1}; r_{2})$. Here, the respective block components belong to the vector spaces $z_{1}, r_{1} \in \mathbb{R}^{n}$, $z_{2}, r_{2} \in \mathbb{R}^{n}$.
The sequential procedure for determining $z$ such that $\mathscr{P}_{\text{VBP}}z = r$ is outlined below.

\medskip
\noindent\textbf{Algorithm 1.} \textit{Evaluating $z = \mathscr{P}_{\mathrm{VBP}}^{-1}r$ }\\[2mm]
1. Solve $(\alpha I+T) z_{1} = r_{1} + \gamma Wr_2$ to find $z_{1}$; \\ [1.5mm]
2. Solve $T z_{2} = r_2-Wz_1$ to obtain $z_{2}$;\\[1.5mm]
3. Form the vector $z = (z_1; z_2)$.
\medskip

As demonstrated in Algorithm~1, determining the solution requires handling two internal linear subsystems at the first and second steps. These systems feature the following coefficient matrices
\[
 \alpha I+T
\quad\text{and}\quad
T.
\]
Because both of these matrices possess the symmetric positive definite (SPD) property, one can compute their solutions directly via Cholesky decomposition or iteratively via the Conjugate Gradient (CG) algorithm. 
It is worth noting that if an approximate (inexact) inner solver is employed for these subsystems, adopting the FGMRES method becomes essential to guarantee that the outer iterations remain robust and successfully converge.

Next, we explore the convergence characteristics associated with the VBP iteration scheme applied to the block linear system~\eqref{orginal}. Additionally, we discuss the spectral behavior of the preconditioned coefficient matrix $\mathscr{P}_{\text{VBP}}^{-1}\mathscr{A}$. 

As established in the literature~\cite{Saad2003}, the sequence of iterative vectors $\mathbf{u}^{(k+1)}$ generated by~\eqref{eqs} will successfully converge to the exact solution of~\eqref{orginal} strictly when the spectral radius of the iteration matrix $\mathscr{L}_{\text{VBP}}$, denoted by $\rho(\mathscr{L}_{\text{VBP}})$, is less than unity. 

\begin{theorem}\label{th1}
	Let \( W \in \mathbb{R}^{n \times n} \) be a symmetric indefinite matrix and let \( T \in \mathbb{R}^{n \times n} \) be symmetric positive definite. Let \( \alpha > 0 \) and \( \gamma > 0 \). If the parameter \( \alpha \) satisfies the condition
\begin{equation}\label{cond1}
	\alpha > \frac{b-c}{2},
\end{equation}
	where
	\begin{equation}\label{amu}
	c = x^{\ast} T x>0, \qquad b = x^{\ast} W T^{-1}W  x\ge 0,
\end{equation}
	with \( x \in \mathbb{C}^n \) such that \( \| x \|_2 = 1 \), then the iterative scheme defined in~\eqref{iter} converges to the unique solution for any initial guess \( x^{(0)} \in \mathbb{R}^n \).
\end{theorem}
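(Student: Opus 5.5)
The plan is to compute the spectrum of $\mathscr{L}_{\text{VBP}}=\mathscr{P}_{\text{VBP}}^{-1}\mathscr{R}_{\text{VBP}}$ explicitly through the generalized eigenvalue problem $\mathscr{R}_{\text{VBP}}z=\lambda\mathscr{P}_{\text{VBP}}z$, with $z=(x;y)\neq 0$. First I note that $\mathscr{P}_{\text{VBP}}$ is nonsingular. By \eqref{precondi} it is the product of two block triangular matrices, and their diagonal blocks $T+\alpha I$, $I$, $I$, $T$ are all SPD. So the iteration \eqref{iter} is well defined, and convergence for every initial guess is equivalent to $\rho(\mathscr{L}_{\text{VBP}})<1$. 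Eigenvalues $\lambda=0$ are harmless, so I only need to treat $\lambda\neq 0$.

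Using \eqref{w1}, the second block row of $\mathscr{R}_{\text{VBP}}z=\lambda\mathscr{P}_{\text{VBP}}z$ reads $0=\lambda(Wx+Ty)$. For $\lambda\neq0$ this gives $y=-T^{-1}Wx$. In particular $x\neq0$, since otherwise $y=0$ and $z=0$, so I may normalize $\|x\|_2=1$.

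Next I substitute $y$ into the first block row,
\[
(\alpha I-\gamma W^2)x+(W-\gamma WT)y=\lambda\big[(T+\alpha I-\gamma W^2)x-\gamma WTy\big].
\]
Since $WTy=-W^2x$, the $\gamma$-terms cancel on both sides. The left-hand side becomes $\alpha x-WT^{-1}Wx$ and the right-hand side becomes $\lambda(T+\alpha I)x$. This cancellation is the key computation, and it explains why $\gamma$ does not appear in \eqref{cond1}.

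Multiplying on the left by $x^{\ast}$ and using the notation \eqref{amu}, I obtain
\[
\lambda=\frac{\alpha-b}{\alpha+c}.
\]
This $\lambda$ is real, because it is an eigenvalue of the symmetric pencil $(\alpha I-WT^{-1}W,\;T+\alpha I)$ whose second matrix is SPD. We have $c>0$ since $T$ is SPD, and $b=(T^{-1/2}Wx)^{\ast}(T^{-1/2}Wx)\ge0$.

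It remains to check $-1<\lambda<1$. The upper bound $\alpha-b<\alpha+c$ always holds because $b\ge0$ and $c>0$. The lower bound $-(\alpha+c)<\alpha-b$ is equivalent to $\alpha>(b-c)/2$, which is exactly \eqref{cond1}. Hence $\rho(\mathscr{L}_{\text{VBP}})<1$ and the scheme converges to the unique solution.

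The only delicate points are the $\lambda=0$ and $x=0$ cases and the verification that the $\gamma$-terms cancel. Beyond that there is no real obstacle. I would also remark that \eqref{cond1} is meant for every unit eigenvector $x$; it holds uniformly if $\alpha>\tfrac12\big(\lambda_{\max}(WT^{-1}W)-\lambda_{\min}(T)\big)$.
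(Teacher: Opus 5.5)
Your proposal is correct and follows essentially the same route as the paper. Like the paper, you use the second block row to get $y=-T^{-1}Wx$ (with $x\neq 0$), note the $\gamma$-terms cancel to give $\alpha x-WT^{-1}Wx=\lambda(T+\alpha I)x$, and read off $\lambda=(\alpha-b)/(\alpha+c)$, so that $|\lambda|<1$ if and only if $\alpha>(b-c)/2$; your extra checks (nonsingularity of $\mathscr{P}_{\text{VBP}}$, the explicit two-sided bound, and the observation that the condition must hold for every eigenvector component $x$) are sound refinements rather than a different approach.
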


\begin{proof}
	Let    $ (\lambda, \textbf{u}=(x;y)) $  be an eigenpair of  the  iteration matrix $ \mathscr{L}_{\text{VBP}} $, such that $  \| x\|_2=1 $. Then, we get
	$\mathscr{L}_{\text{VBP}}(x;y)=\lambda (x;y),$
	which is equivalent to
	\begin{equation}\label{eqq}
	\mathscr{R}_{\text{VBP}} \begin{bmatrix} x \\ y \end{bmatrix}=\lambda \mathscr{P}_{\text{VBP}} \begin{bmatrix} x \\ y \end{bmatrix}.
\end{equation}
	It is straightforward to show that this equation is equivalent to 	
	\begin{align}\label{eq.14}
	\begin{cases}
\alpha x-\gamma W^2x+Wy-\gamma WTy=\lambda (Tx +\alpha x-\gamma W^2x-\gamma WTy ),\\
	0=\lambda (Wx+Ty).
	\end{cases}
	\end{align} 
If $\lambda=0$, then there is nothing to prove. So we assume that $  \lambda\neq 0 $.
	
We now claim that \( x\neq 0 \). Indeed, if \( x = 0 \), then the second equation in~\eqref{eq.14} reduces to \( Ty = 0 \). Since \( T \) is positive definite, it is nonsingular, and therefore \( y = 0 \). Consequently, \( \mathbf{u} = 0 \), which contradicts the assumption that \( \mathbf{u} \) is an eigenvector.
	
	 So we assume that $\lambda \neq 0$ and $ x\neq 0 $.  In this case, from the second relation of Eq. \eqref{eq.14},  we get 
	$$y=-T^{-1}Wx.$$  
	Substituting $y$ in the first equation of \eqref{eq.14} yields
	\begin{equation}\label{eq.15}
	\alpha x-WT^{-1}Wx=\lambda (Tx+\alpha x).
	\end{equation}
	
	Now, multiplying both sides of relation~\eqref{eq.15} from the left by the vector $ 	x^{\ast} $
	, we obtain
	\begin{equation}\label{eq.171}
	\alpha x^{\ast} x-x^{\ast}WT^{-1}Wx=\lambda (x^{\ast}Tx+\alpha x^{\ast} x).
	\end{equation}
	Equation~\eqref{eq.171} yields
	
	\begin{equation}\label{A8}
	\lambda=\frac{\alpha -b}{\alpha+ c}.
	\end{equation}
	where  $ c = x^{\ast} Tx$ and $b = x^{\ast} W T^{-1}Wx $. 
	For the convergence condition to hold, we must have
$|\lambda| < 1.$
	Hence, from~\eqref{A8}, we conclude tha
	\begin{equation*}\label{A21}
	\alpha > \frac{b-c}{2}. 
	\end{equation*}
	Thus, the proof is complete.
\end{proof}
\begin{remark}
	Since the values of $c$ and $b$ defined in Theorem~\ref{th1} are  positive and non-negative, respectively, it follows that all eigenvalues of the iteration matrix are real. Therefore, under condition~\eqref{cond1}, we have $-1 < \lambda < 1$. Furthermore, if
	\begin{equation}
	\alpha > \frac{1}{2} \left( \lambda_{\max}(WT^{-1}W) - \lambda_{\min}(T) \right),
	\end{equation}
	then condition~\eqref{cond1} holds and the method converges. Moreover, we have
	\begin{align*}
	x^{\ast} W T^{-1} W x &= \frac{(Wx)^{\ast} T^{-1} (Wx)}{(Wx)^{\ast} (Wx)} \cdot (x^{\ast} W^2 x) \\
	&\le \lambda_{\max}(T^{-1}) \cdot \lambda_{\max}(W^2) \\
	&= \frac{\sigma_{\max}^2(W)}{\lambda_{\min}(T)}.
	\end{align*}
	Therefore, by Eq.~\eqref{cond1}, if $\alpha > \max\{0, \beta\}$ with
	\begin{equation}
	\beta = \frac{1}{2} \left( \frac{\sigma_{\max}^2(W)}{\lambda_{\min}(T)} - \lambda_{\min}(T) \right),
	\end{equation}
	the convergence of the method is guaranteed.
\end{remark}
It should be remarked that the parameter \( \gamma \) has no influence on the convergence analysis of the proposed method, nor on the spectral properties of the preconditioned matrix. Nevertheless, this parameter proves to be effective in numerical experiments.

\section{Eigenvalue analysis of $ \mathscr{P}_{\text {VBP}}^{-1}\mathscr{A}$ }\label{Sec3}
\medskip

\begin{theorem}\label{th3}
Under the assumptions of Theorem~1, the eigenvalues of the preconditioned matrix 
$\mathscr{P}_{\mathrm{VBP}}^{-1} \mathscr{A}$ 
are real and positive, and lie in the interval
	\begin{equation}\label{spctt}
	\sigma(\mathscr{P}_{\mathrm{VBP}}^{-1} \mathscr{A}) 
	\subset 
	\left[
	\frac{\lambda_{\min}(T) + \sigma_{\min}^2(W)/\lambda_{\max}(T)}
	{\alpha + \lambda_{\max}(T)},
	\,
	\frac{\lambda_{\max}(T) + \sigma_{\max}^2(W)/\lambda_{\min}(T)}
	{\alpha + \lambda_{\min}(T)}
	\right].
	\end{equation}
	where $\lambda_{\min}(T)$ and $\lambda_{\max}(T)$ are the smallest and largest eigenvalues of $T$, respectively, and $\sigma_{\min}(W)$ and $\sigma_{\max}(W)$ are the smallest and largest singular value of $W$, respectively.
\end{theorem}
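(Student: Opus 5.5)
The plan is to reduce the spectrum of $\mathscr{P}_{\mathrm{VBP}}^{-1}\mathscr{A}$ to that of the iteration matrix. Since $\mathscr{A}=\mathscr{P}_{\mathrm{VBP}}-\mathscr{R}_{\mathrm{VBP}}$, we have $\mathscr{P}_{\mathrm{VBP}}^{-1}\mathscr{A}=I-\mathscr{L}_{\mathrm{VBP}}$. Hence every eigenvalue has the form $\mu=1-\lambda$ with $\lambda\in\sigma(\mathscr{L}_{\mathrm{VBP}})$, and the eigenvectors coincide. Equivalently, I would work directly with $\mathscr{A}(x;y)=\mu\,\mathscr{P}_{\mathrm{VBP}}(x;y)$. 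The second block row reads $Wx+Ty=\mu(Wx+Ty)$, so either $\mu=1$ or $y=-T^{-1}Wx$. This is exactly the dichotomy $\lambda=0$ versus $\lambda\neq 0$ from the proof of Theorem~\ref{th1}.

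\textbf{Step 1 ($\mu\neq 1$).} As in Theorem~\ref{th1}, $x\neq 0$, and after normalizing $\|x\|_2=1$, substitution gives \eqref{eq.15}. Multiplying by $x^{\ast}$ yields \eqref{A8}, so
\[
\mu = 1-\frac{\alpha-b}{\alpha+c}=\frac{c+b}{\alpha+c},\qquad c=x^{\ast}Tx,\quad b=x^{\ast}WT^{-1}Wx .
\]
Since $T$ and $WT^{-1}W$ are real symmetric, $b$ and $c$ are real. Moreover $c>0$, $b\ge 0$ and $\alpha>0$, so $\mu$ is real and positive.

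\textbf{Step 2 (bounds).} Rayleigh quotients give $\lambda_{\min}(T)\le c\le\lambda_{\max}(T)$. Writing $b=(Wx)^{\ast}T^{-1}(Wx)$, we get
\[
\frac{x^{\ast}W^2x}{\lambda_{\max}(T)}\le b\le \frac{x^{\ast}W^2x}{\lambda_{\min}(T)},
\]
and $\sigma_{\min}^2(W)\le x^{\ast}W^2x\le\sigma_{\max}^2(W)$. This mirrors the estimate in the Remark after Theorem~\ref{th1}. Then I would bound the fraction $(c+b)/(\alpha+c)$ crudely, treating numerator and denominator separately: the lower bound uses the smallest numerator over the largest denominator, and the upper bound the largest numerator over the smallest denominator. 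This gives exactly \eqref{spctt}.

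\textbf{Main obstacle: the case $\mu=1$.} Here $\mathscr{R}_{\mathrm{VBP}}$ has zero bottom block row, so it has rank at most $n$. Hence $\lambda=0$ is an eigenvalue of $\mathscr{L}_{\mathrm{VBP}}$ of geometric multiplicity at least $n$, and $\mu=1$ always occurs. I would have to check that $1$ lies in the interval \eqref{spctt}. This requires
\[
\lambda_{\min}(T)+\frac{\sigma_{\min}^2(W)}{\lambda_{\max}(T)}\le\alpha+\lambda_{\max}(T)
\quad\text{and}\quad
\alpha+\lambda_{\min}(T)\le\lambda_{\max}(T)+\frac{\sigma_{\max}^2(W)}{\lambda_{\min}(T)} .
\]
These are not automatic: they depend on the size of $\alpha$ relative to the data. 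I would first try to derive them from condition \eqref{cond1} together with the chain $b\le\sigma_{\max}^2(W)/\lambda_{\min}(T)$. If that fails, the cleanest honest formulation is to state \eqref{spctt} for the eigenvalues different from $1$, and to record separately that $\mu=1$ is an eigenvalue of multiplicity at least $n$ (real and positive in any case). Realness and positivity of the whole spectrum hold regardless.
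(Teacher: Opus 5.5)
\textbf{The gap you flagged is real, and it cannot be closed.} Your Steps 1 and 2 are exactly the paper's proof. The paper writes every eigenvalue as $\zeta=1-\lambda$, invokes \eqref{A8} to get $\zeta=(c+b)/(\alpha+c)$, bounds $c$ and $b$ by Rayleigh quotients, and bounds numerator and denominator separately. It stops there. It never notices that \eqref{A8} was derived in Theorem~\ref{th1} only for $\lambda\neq 0$, so the eigenvalue $\zeta=1$ is silently skipped. As you correctly observe, $\zeta=1$ always occurs with multiplicity at least $n$, because $\mathscr{R}_{\mathrm{VBP}}$ has a zero second block row. The statement as written is false.

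\textbf{Why \eqref{cond1} cannot help.} Your first idea, deriving the two inequalities from \eqref{cond1}, fails. The second inequality is equivalent to
$\alpha\le\lambda_{\max}(T)-\lambda_{\min}(T)+\sigma_{\max}^2(W)/\lambda_{\min}(T)$, which is an upper bound on $\alpha$. By contrast, \eqref{cond1} only bounds $\alpha$ from below. Every sufficiently large $\alpha$ satisfies the hypotheses and drives the right endpoint of \eqref{spctt} below $1$.

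\textbf{Counterexample.} Take $T=I_2$, $W=\mathrm{diag}(1,-1)$ and any $\gamma>0$.
\begin{itemize}
\item For every unit $x$ we have $c=b=1$, so \eqref{cond1} reduces to $\alpha>0$.
\item Both endpoints of \eqref{spctt} equal $2/(1+\alpha)$, so the interval is a single point.
\item The eigenvalues different from $1$ satisfy $(T+WT^{-1}W)x=\zeta(T+\alpha I)x$, that is, $2x=\zeta(1+\alpha)x$.
\end{itemize}
Hence
\[
\sigma(\mathscr{P}_{\mathrm{VBP}}^{-1}\mathscr{A})=\Bigl\{1,\,1,\,\tfrac{2}{1+\alpha},\,\tfrac{2}{1+\alpha}\Bigr\}.
\]
For $\alpha=3$, say, the claimed interval is $[1/2,1/2]$, which does not contain the eigenvalue $1$.

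\textbf{What to do instead.} Drop the attempt via \eqref{cond1} and go straight to your fallback. All eigenvalues are real and positive, and writing $[L,U]$ for the interval in \eqref{spctt}, we have $\sigma(\mathscr{P}_{\mathrm{VBP}}^{-1}\mathscr{A})\subset\{1\}\cup[L,U]$. Your Steps 1--2 prove exactly this corrected statement. It is also what the paper's subsequent theorem asserts: eigenvalue $1$ with multiplicity at least $n$, and the remaining eigenvalues in \eqref{spct}.
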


\begin{proof}
	Let $\zeta$ be an eigenvalue of the preconditioned matrix $\mathscr{P}_{\mathrm{VBP}}^{-1} \mathscr{A}$. Then, it can be expressed as $\zeta = 1 - \lambda$, where $\lambda$ is an eigenvalue of the iteration matrix $\mathscr{L}_{\mathrm{VBP}}$. Using Eq.~\eqref{A8}, we can write
	\begin{equation}\label{eq:zetamu}
	\zeta = 1 - \frac{\alpha - b}{\alpha + c} = \frac{c + b}{c+\alpha },
	\end{equation}
	where $c = x^{\ast} T x$ and $b = x^{\ast} W T^{-1} W x$.
	
	Given that $T$ is symmetric positive definite, the definitions of the quadratic forms $c$ and $b$ ensure that $\zeta$ is  real and positive.
	
	Under the assumptions stated in Theorem~\ref{th1}, the variables $c$ and $b$ are bounded as follows
	\begin{equation}\label{eq:bounds_cb}
	\lambda_{\min}(T) \le c \le \lambda_{\max}(T), \quad \text{and} \quad
	\frac{\sigma_{\min}^2(W)}{\lambda_{\max}(T)} \le b \le \frac{\sigma_{\max}^2(W)}{\lambda_{\min}(T)}.
	\end{equation}
	
	To determine the bounds for $\zeta$, we apply the inequalities from \eqref{eq:bounds_cb} to the expression in \eqref{eq:zetamu}. Assuming $\alpha > 0$ and knowing $c > 0$, we can establish the lower bound by minimizing the numerator and maximizing the denominator
	\[
	\zeta \ge \frac{\lambda_{\min}(T) + \frac{\sigma_{\min}^2(W)}{\lambda_{\max}(T)}}{\alpha + \lambda_{\max}(T)}.
	\]
	Similarly, we obtain the upper bound by maximizing the numerator and minimizing the denominator
	\[
	\zeta \le \frac{\lambda_{\max}(T) + \frac{\sigma_{\max}^2(W)}{\lambda_{\min}(T)}}{\alpha + \lambda_{\min}(T)}.
	\]
	
	Consequently, the spectrum of the preconditioned matrix, denoted by $\sigma(\mathscr{P}_{\mathrm{VBP}}^{-1} \mathscr{A})$, is contained within the following interval
	\begin{equation}\label{spct}
	\sigma(\mathscr{P}_{\mathrm{VBP}}^{-1} \mathscr{A}) 
	\subset 
	\left[
	\frac{\lambda_{\min}(T) + \sigma_{\min}^2(W)/\lambda_{\max}(T)}
	{\alpha + \lambda_{\max}(T)},
	\,
	\frac{\lambda_{\max}(T) + \sigma_{\max}^2(W)/\lambda_{\min}(T)}
	{\alpha + \lambda_{\min}(T)}
	\right].
	\end{equation}
	This completes the proof.
\end{proof}
\begin{theorem}
Assuming the conditions stated in Theorem~\ref{th1} are satisfied, the preconditioned matrix $\mathscr{P}_{\mathrm{VBP}}^{-1}\mathscr{A}$ possesses the eigenvalue $1$ with algebraic multiplicity at least $n$. Furthermore, the remaining eigenvalues are entirely real and reside in a positive real interval~\eqref{spct}.
\end{theorem}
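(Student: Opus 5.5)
The plan is to read the spectrum off the structure of the splitting $\mathscr{A}=\mathscr{P}_{\mathrm{VBP}}-\mathscr{R}_{\mathrm{VBP}}$. From this splitting,
\[
\mathscr{P}_{\mathrm{VBP}}^{-1}\mathscr{A}=I-\mathscr{L}_{\mathrm{VBP}}=I-\mathscr{P}_{\mathrm{VBP}}^{-1}\mathscr{R}_{\mathrm{VBP}} .
\]
So it suffices to show that $\mathscr{L}_{\mathrm{VBP}}$ has the eigenvalue $0$ with algebraic multiplicity at least $n$. The remaining eigenvalues are then handled by Theorem~\ref{th3}.

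\textbf{Step 1 (rank of the remainder).} By \eqref{w1}, the second block row of $\mathscr{R}_{\mathrm{VBP}}$ is zero, so $\operatorname{rank}(\mathscr{R}_{\mathrm{VBP}})\le n$. Since $\mathscr{P}_{\mathrm{VBP}}$ is nonsingular (it is a product of two nonsingular block-triangular factors), $\operatorname{rank}(\mathscr{L}_{\mathrm{VBP}})\le n$. This only gives geometric multiplicity at least $n$ for the eigenvalue $0$. Algebraic multiplicity needs more, and that is the point requiring care.

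\textbf{Step 2 (block triangular form, the key step).} To control algebraic multiplicity, I would write $\mathscr{R}_{\mathrm{VBP}}=\begin{bmatrix} I\\ 0\end{bmatrix}[R_1\ R_2]$, where $R_1=\alpha I-\gamma W^2$ and $R_2=W-\gamma WT$. Then
\[
\mathscr{L}_{\mathrm{VBP}}=\mathscr{P}_{\mathrm{VBP}}^{-1}\begin{bmatrix} I\\ 0\end{bmatrix}[R_1\ R_2].
\]
The nonzero eigenvalues of a product $XY$ coincide, with algebraic multiplicities, with those of $YX$. Here $X\in\mathbb{R}^{2n\times n}$ and $YX\in\mathbb{R}^{n\times n}$, so the characteristic polynomial of $\mathscr{L}_{\mathrm{VBP}}$ is $\lambda^{n}$ times that of an $n\times n$ matrix $M$. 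This gives algebraic multiplicity at least $n$ for $\lambda=0$.

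Equivalently, I can compute the first block column of $\mathscr{P}_{\mathrm{VBP}}^{-1}$ from the factorization \eqref{precondi}, or via Algorithm~1 with $r_2=0$: it is $\bigl((\alpha I+T)^{-1};\,-T^{-1}W(\alpha I+T)^{-1}\bigr)$. Multiplying out then shows
\[
\mathscr{L}_{\mathrm{VBP}}=\begin{bmatrix} (\alpha I+T)^{-1}R_1 & (\alpha I+T)^{-1}R_2\\ -T^{-1}W(\alpha I+T)^{-1}R_1 & -T^{-1}W(\alpha I+T)^{-1}R_2\end{bmatrix},
\]
whose second block row equals $-T^{-1}W$ times the first. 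Conjugating by $\begin{bmatrix} I&0\\ T^{-1}W& I\end{bmatrix}$ makes $\mathscr{L}_{\mathrm{VBP}}$ block upper triangular with zero $(2,2)$ block. Its $(1,1)$ block is
\[
M=(\alpha I+T)^{-1}\bigl(R_1-R_2T^{-1}W\bigr)=(\alpha I+T)^{-1}(\alpha I-WT^{-1}W),
\]
where the $\gamma$-terms cancel because $\gamma W^2=\gamma WTT^{-1}W$. This is consistent with \eqref{eq.15} and with the remark that $\gamma$ plays no role.

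\textbf{Step 3 (conclusion).} It follows that
\[
\mathscr{P}_{\mathrm{VBP}}^{-1}\mathscr{A}\sim\begin{bmatrix} I-M & *\\ 0 & I\end{bmatrix},
\]
so $1$ is an eigenvalue with algebraic multiplicity at least $n$. The other $n$ eigenvalues are those of
\[
I-M=(\alpha I+T)^{-1}(T+WT^{-1}W).
\]
This is a product of an SPD matrix with a symmetric positive definite matrix, hence similar to a symmetric positive definite matrix, so its eigenvalues are real and positive. Each of them is of the form $\zeta=(c+b)/(c+\alpha)$ with the normalized eigenvector $x$, exactly as in Theorem~\ref{th3}. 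Invoking Theorem~\ref{th3} therefore places them in the interval \eqref{spct}.
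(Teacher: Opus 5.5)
Your proposal is correct and follows essentially the paper's route: both arguments reduce the preconditioned matrix, up to similarity, to a block upper triangular matrix with $(2,2)$ block $I$ and $(1,1)$ block formed from $T+WT^{-1}W$ and $(T+\alpha I)^{-1}$, and then reuse the formula $\zeta=(c+b)/(c+\alpha)$ and the bounds from Theorem~\ref{th3}. The one difference is how the triangular form is reached. The paper works directly with the similar matrix $\mathscr{A}\mathscr{P}_{\mathrm{VBP}}^{-1}=I-\mathscr{R}_{\mathrm{VBP}}\mathscr{P}_{\mathrm{VBP}}^{-1}$, whose second block row is already $[0\ I]$, giving $\Theta_1=(T+WT^{-1}W)(T+\alpha I)^{-1}$. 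You instead conjugate $\mathscr{P}_{\mathrm{VBP}}^{-1}\mathscr{A}$ by $\begin{bmatrix} I&0\\ T^{-1}W&I\end{bmatrix}$ and obtain the similar block $(\alpha I+T)^{-1}(T+WT^{-1}W)$.
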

\begin{proof}
	A straightforward algebraic manipulation reveals that the preconditioned matrix can be expressed as
	\begin{equation}\label{Theta}
	\mathscr{A}\mathscr{P}_{\mathrm{VBP}}^{-1} = 
	\begin{bmatrix}
	\Theta_1 & \Theta_2 \\
	0 & I
	\end{bmatrix},
	\end{equation}
	where
	\begin{equation}
\Theta_1=(T + W T^{-1} W)(T + \alpha I)^{-1},\quad	\Theta_2 = \gamma (T + W T^{-1} W)(T + \alpha I)^{-1} W-WT^{-1}.
	\end{equation}
	Note that the matrix $\mathscr{A}\mathscr{P}_{\mathrm{VBP}}^{-1}$ is similar to $\mathscr{P}_{\mathrm{VBP}}^{-1}\mathscr{A}$; hence, they have identical spectra.
	
	From this block triangular structure, it is immediately clear that the matrix $\mathscr{P}_{\mathrm{VBP}}^{-1}\mathscr{A}$ admits the eigenvalue $1$ with algebraic multiplicity at least $n$. Moreover, the remaining eigenvalues coincide with those of the matrix $\Theta_1$, which have been characterized in relation~\eqref{eq:zetamu}. As established therein, these eigenvalues are all real and positive, and they lie within the interval specified in~\eqref{spct}.
\end{proof}
\begin{theorem}
	Assume that the preconditioner $\mathscr{P}_{\mathrm{VBP}}$ is given by \eqref{precondi}. Then, the linear independence of $n + r$ eigenvectors of the preconditioned matrix $\mathscr{P}_{\mathrm{VBP}}^{-1} \mathscr{A}$ (with $0 \leq r \leq n$) admits the following characterization
	
	\medskip
	\noindent\textbf{Case I ($\zeta = 1$).}  
	The eigenvalue $\zeta = 1$ has eigenvectors of the form
	\[
	\mathbf{u} = \begin{bmatrix} K y \\ y \end{bmatrix}, \qquad y \neq 0,
	\]
	where 
	\[
	K = (\alpha I - \gamma W^2)^{-1}(\gamma W T - W)
	\]
	and $\frac{\alpha}{\gamma} \neq {\mu}^2$ for every eigenvalue $\mu$ of $W$.
	
	\medskip
	\noindent\textbf{Case II ($\zeta \neq 1$).}  
	For each $\zeta \neq 1$, a corresponding eigenvector is given by
	\[
	\mathbf{u} = \begin{bmatrix} x \\ -T^{-1} W x \end{bmatrix},
	\qquad x \neq 0.
	\]
\end{theorem}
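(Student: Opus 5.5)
The plan is to work with the equivalent generalized eigenproblem $\mathscr{A}\mathbf{u}=\zeta\mathscr{P}_{\mathrm{VBP}}\mathbf{u}$. Since $\mathscr{A}=\mathscr{P}_{\mathrm{VBP}}-\mathscr{R}_{\mathrm{VBP}}$, this is the same as $\mathscr{R}_{\mathrm{VBP}}\mathbf{u}=(1-\zeta)\mathscr{P}_{\mathrm{VBP}}\mathbf{u}$. Writing $\mathbf{u}=(x;y)$ and using the block forms \eqref{precondi} and \eqref{w1}, the system becomes
\begin{align*}
(\alpha I-\gamma W^2)x+(W-\gamma WT)y&=(1-\zeta)\bigl((T+\alpha I-\gamma W^2)x-\gamma WTy\bigr),\\
0&=(1-\zeta)(Wx+Ty).
\end{align*}
This is exactly system \eqref{eq.14} with $\lambda=1-\zeta$. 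I then split according to whether $\zeta=1$ or not.

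\textbf{Case I ($\zeta=1$).} The second equation is satisfied automatically. The first reduces to $\mathscr{R}_{\mathrm{VBP}}\mathbf{u}=0$, that is,
\[
(\alpha I-\gamma W^2)x=(\gamma WT-W)y.
\]
The hypothesis $\alpha/\gamma\neq\mu^2$ for every eigenvalue $\mu$ of $W$ says precisely that $\alpha I-\gamma W^2$ is nonsingular. Its eigenvalues are $\alpha-\gamma\mu^2$, because $W$ is symmetric and so orthogonally diagonalizable. Hence $x=Ky$ with $K$ as in the statement, and $y$ ranges freely over $\mathbb{R}^n\setminus\{0\}$. Taking $y=e_1,\dots,e_n$ gives $n$ vectors $(Ke_j;e_j)$. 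These are linearly independent because their lower blocks are. This is consistent with the multiplicity-$n$ statement of the previous theorem.

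\textbf{Case II ($\zeta\neq1$).} Now $1-\zeta\neq0$, so the second equation forces $Wx+Ty=0$, i.e. $y=-T^{-1}Wx$. If $x=0$ then $y=0$, so $x\neq0$, exactly as in the proof of Theorem~\ref{th1}. Substituting into the first equation gives \eqref{eq.15} with $\lambda=1-\zeta$:
\[
(T+WT^{-1}W)x=\zeta(T+\alpha I)x.
\]
So $x$ is an eigenvector of $\Theta_1^{T}$-type pencils, and the eigenvector has the claimed form $(x;-T^{-1}Wx)$. Let $r$ be the number of linearly independent such $x$ whose eigenvalues satisfy $\zeta\neq1$; then $0\le r\le n$.

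The main obstacle is showing that the full family of $n+r$ vectors is linearly independent, i.e. that no combination mixes the two cases. I would argue directly. Suppose
\[
\sum_j a_j\begin{bmatrix}Ky_j\\ y_j\end{bmatrix}+\sum_k b_k\begin{bmatrix}x_k\\ -T^{-1}Wx_k\end{bmatrix}=0 .
\]
Apply $\mathscr{P}_{\mathrm{VBP}}^{-1}\mathscr{A}-I$ to this relation. It annihilates the first group and multiplies the $k$-th vector of the second group by $\zeta_k-1\neq0$. This yields a vanishing combination of the Case II eigenvectors alone, with coefficients $b_k(\zeta_k-1)$.

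Those vectors are independent whenever the $x_k$ are independent, since their top blocks are the $x_k$. Hence all $b_k=0$. The remaining relation involves only Case I vectors, whose lower blocks $y_j$ are independent, so all $a_j=0$. This step uses only the general fact that eigenvectors belonging to distinct eigenvalues are independent, applied between the groups, so I expect it to go through routinely. The care needed is in choosing the $x_k$ within each eigenspace of the pencil $(T+WT^{-1}W,\,T+\alpha I)$ to be a linearly independent set.
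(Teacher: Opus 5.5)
Your proposal is correct and follows essentially the same route as the paper. You use the same block expansion (phrased as $\mathscr{R}_{\mathrm{VBP}}\mathbf{u}=(1-\zeta)\mathscr{P}_{\mathrm{VBP}}\mathbf{u}$ instead of $\mathscr{A}\mathbf{u}=\zeta\mathscr{P}_{\mathrm{VBP}}\mathbf{u}$), the same case split giving $x=Ky$ and $y=-T^{-1}Wx$, and the same independence argument: apply $\mathscr{P}_{\mathrm{VBP}}^{-1}\mathscr{A}-I$ to eliminate the $\zeta=1$ group, then read off the coefficients from the top and bottom blocks. Two points the paper only asserts are explicit in your version: the Case II vectors are independent because their top blocks $x_k$ are, and $\alpha I-\gamma W^2$ is nonsingular since its eigenvalues are $\alpha-\gamma\mu^2$.
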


\begin{proof}
	Let $\mathbf{u} = (x;\, y)$ be an eigenvector associated with the eigenvalue $\zeta$, i.e.,
	\[
	\mathscr{P}_{\mathrm{VBP}}^{-1} \mathscr{A} \mathbf{u}
	= \zeta \mathbf{u},
	\qquad\text{equivalently}\qquad
	\mathscr{A} \mathbf{u}
	= \zeta \, \mathscr{P}_{\mathrm{VBP}} \mathbf{u}.
	\]
	This relation expands to
	\begin{equation}\label{eqss}
	\begin{bmatrix}
	T & -W \\
	W & T
	\end{bmatrix}
	\begin{bmatrix}
	x \\ y 
	\end{bmatrix}
	=
	\zeta
	\begin{bmatrix}
	T + \alpha I - \gamma W^2 & -\gamma W T \\
	W & T
	\end{bmatrix}
	\begin{bmatrix}
	x \\ y 
	\end{bmatrix},
	\end{equation}
	where $x, y \in \mathbb{R}^n$.
	Equation~\eqref{eqss} is equivalent to the system
	\begin{align}
	T x - W y &= \zeta \bigl( T x + \alpha x - \gamma W^2 x - \gamma W T y \bigr), \label{eqd3} \\
	W x + T y &= \zeta ( W x + T y ). \label{eqx1}
	\end{align}
	The eigenvalue $\zeta$ cannot be zero because both matrices $\mathscr{P}_{\mathrm{VBP}}$ and $\mathscr{A}$ are nonsingular. Hence, we consider the following cases.
	
	\medskip
	\noindent\textbf{Case 1 ($\zeta = 1$):} 
	Substituting $\zeta = 1$ into equation~\eqref{eqd3} yields
	\begin{equation}\label{A1A}
	(\gamma W T - W) y = (\alpha I - \gamma W^2) x.
	\end{equation}
	Let $\mu$ be an eigenvalue of the matrix $W$. Then the matrix
	\[
	\alpha I - \gamma W^2
	\]
	is nonsingular if and only if
	\[
	\frac{\alpha}{\gamma} \neq \mu^2
	\]
	for every eigenvalue $\mu$ of $W$.
	
	Therefore, from relation~\eqref{A1A}, we conclude that
	\[
	x = (\alpha I - \gamma W^2)^{-1} (\gamma W T - W) y.
	\]
	Hence, the eigenvector corresponding to the eigenvalue $\zeta = 1$ is given by
	\[
	\mathbf{u} = \begin{bmatrix}
	(\alpha I - \gamma W^2)^{-1} (\gamma W T - W) y \\
	y
	\end{bmatrix},
	\quad y \neq 0.
	\]
	
	\medskip
	\noindent\textbf{Case 2 ($\zeta \neq 1$):} 
	For $\zeta \neq 1$, equation~\eqref{eqx1} gives $y = -T^{-1} W x$. Substituting this into equation~\eqref{eqd3}, we have
	\begin{equation}\label{A14}
	\bigl( (1 - \zeta) T + WT^{-1}W - \alpha \zeta  \bigr) x = 0.
	\end{equation}
	
	Since $\mathscr{P}_{\mathrm{VBP}}^{-1} \mathscr{A}$ and $\mathscr{L}_{\mathrm{VBP}}$ share the same eigenvectors, and by Theorem~\ref{th1}, it follows that $x \neq 0$.  
	Without loss of generality, we may assume $\|x\|_2 = 1$.  
	Premultiplying~\eqref{A14} by $x^*$ yields the quadratic equation
	\begin{equation}\label{tqad}
	\zeta = \frac{c+b }{c + \alpha },
	\end{equation}
	where $c$ and $b$ are as defined in~\eqref{amu}.  
	
	Thus, $\zeta$ in~\eqref{tqad} is an eigenvalue of 
	$\mathscr{P}_{\mathrm{VBP}}^{-1} \mathscr{A}$, and the corresponding eigenvector is
	\[
	\mathbf{u} = \begin{bmatrix}
	x \\
	-T^{-1} W x
	\end{bmatrix},
	\quad x \neq 0.
	\]
	
	\medskip
	In what follows, we prove that the set of eigenvectors corresponding to $\zeta = 1$ and $\zeta \neq 1$ are linearly independent.  
	First, we determine the number of eigenvectors
	
	\begin{itemize}
		\item For $\zeta = 1$, since $y \in \mathbb{R}^n$ is arbitrary, there are exactly $n$ linearly independent vectors, which can be written as
		\[
		\mathbf{u}_j^{(1)} = \begin{bmatrix} K e_j \\ e_j \end{bmatrix}, \qquad j = 1, \dots, n,
		\]
		where $\{e_j\}_{j=1}^n$ is the standard basis of $\mathbb{R}^n$.
		
		\item For $\zeta \neq 1$, assume there exist $r$ linearly independent vectors
		\[
		\mathbf{u}_j^{(2)} = \begin{bmatrix} x_j \\ -T^{-1} W x_j \end{bmatrix}, \qquad j = 1, \dots, r,
		\]
		where $0 \leq r \leq n$ and $\{x_j\}_{j=1}^r$ is a linearly independent set in $\mathbb{R}^n$.
	\end{itemize}
	
	Now suppose the following linear combination equals zero
	\begin{equation}\label{u1}
	\sum_{j=1}^n c_j^{(1)} \mathbf{u}_j^{(1)}
	+
	\sum_{j=1}^r c_j^{(2)} \mathbf{u}_j^{(2)}
	=
	\mathbf{0}.
	\end{equation}
	
	Substituting the vector forms, we obtain
	\begin{equation}
	\sum_{j=1}^n c_j^{(1)} 
	\begin{bmatrix} K e_j \\ e_j \end{bmatrix}
	+
	\sum_{j=1}^r c_j^{(2)} 
	\begin{bmatrix} x_j \\ -T^{-1} W x_j \end{bmatrix}
	=
	\begin{bmatrix} 0 \\ 0 \end{bmatrix}.
	\end{equation}
	
	Since each $\mathbf{u}_j^{(1)}$ is an eigenvector corresponding to the eigenvalue $1$, we have
	\[
	\mathscr{P}_{\mathrm{VBP}}^{-1} \mathscr{A} \, \mathbf{u}_j^{(1)}
	=
	1 \cdot \mathbf{u}_j^{(1)}.
	\]
	Similarly, for each $\mathbf{u}_j^{(2)}$ corresponding to the eigenvalue $\zeta_j$, we have
	\[
	\mathscr{P}_{\mathrm{VBP}}^{-1} \mathscr{A} \, \mathbf{u}_j^{(2)}
	=
	\zeta_j \cdot \mathbf{u}_j^{(2)}.
	\]
	
	Now multiply both sides of equation~\eqref{u1} on the left by the matrix 
	$\mathscr{P}_{\mathrm{VBP}}^{-1} \mathscr{A}$. This yields
	\begin{equation}\label{u2}
	\sum_{j=1}^n c_j^{(1)} \mathbf{u}_j^{(1)}
	+
	\sum_{j=1}^r \zeta_j c_j^{(2)} \mathbf{u}_j^{(2)}
	=
	\mathbf{0}.
	\end{equation}
	
	Subtracting equation~\eqref{u1} from equation~\eqref{u2}, the terms corresponding to the first group (with eigenvalue $1$) cancel out, since they are identical in both equations. For the second group, we obtain
	\[
	\zeta_j c_j^{(2)} - c_j^{(2)} = (\zeta_j - 1) c_j^{(2)}.
	\]
	Thus
	\begin{equation}\label{u4}
	\sum_{j=1}^r (\zeta_j - 1) c_j^{(2)} \mathbf{u}_j^{(2)}
	=
	\mathbf{0}.
	\end{equation}
	
	Equation~\eqref{u4} is a linear combination of the vectors 
	$\mathbf{u}_j^{(2)}$ equal to zero. However, by assumption, these $r$ vectors are linearly independent. Therefore, the only possibility is that all coefficients are zero
	\[
	(\zeta_j - 1) c_j^{(2)} = 0, \qquad \forall j = 1, \dots, r.
	\]
	Since $\zeta_j \neq 1$, we have $\zeta_j - 1 \neq 0$, and consequently
	\begin{equation}\label{u5}
	c_j^{(2)} = 0, \qquad \forall j = 1, \dots, r.
	\end{equation}
	
	Now substituting~\eqref{u5} into equation~\eqref{u1}, we get
	\begin{equation}\label{u6}
	\sum_{j=1}^n c_j^{(1)} \mathbf{u}_j^{(1)}
	=
	\mathbf{0},
	\end{equation}
	which is equivalent to
	\begin{equation}\label{u7}
	\sum_{j=1}^n c_j^{(1)} 
	\begin{bmatrix} K e_j \\ e_j \end{bmatrix}
	=
	\begin{bmatrix} 0 \\ 0 \end{bmatrix}.
	\end{equation}
	
	From the second component of equation~\eqref{u7}, we have
	\[
	\sum_{j=1}^n c_j^{(1)} e_j = 0.
	\]
	Since $\{e_j\}_{j=1}^n$ is the standard basis of $\mathbb{R}^n$, they are linearly independent, and therefore
	\begin{equation}\label{u8}
	c_j^{(1)} = 0, \qquad \forall j = 1, \dots, n.
	\end{equation}
	
	From~\eqref{u5} and \eqref{u8}, it follows that all coefficients are zero. Hence, only the trivial linear combination can yield the zero vector. Consequently, the set
	\[
	\left\{ \mathbf{u}_j^{(1)} \right\}_{j=1}^n \cup 
	\left\{ \mathbf{u}_j^{(2)} \right\}_{j=1}^r
	\]
	consists of $n + r$ linearly independent vectors.
\end{proof}

\begin{theorem}\label{thm:minimal_poly}
	Let the VBP preconditioner $\mathscr{P}_{\mathrm{VBP}}$ be defined as in \eqref{precondi}. Then the degree of the minimal polynomial of the preconditioned matrix $\mathscr{P}_{\mathrm{VBP}}^{-1} \mathscr{A}$ is at most $n + 1$.
\end{theorem}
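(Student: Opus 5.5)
The plan is to work with the right-preconditioned matrix $\mathscr{A}\mathscr{P}_{\mathrm{VBP}}^{-1}$ from \eqref{Theta}. It is similar to $\mathscr{P}_{\mathrm{VBP}}^{-1}\mathscr{A}$ via $\mathscr{P}_{\mathrm{VBP}}^{-1}(\mathscr{A}\mathscr{P}_{\mathrm{VBP}}^{-1})\mathscr{P}_{\mathrm{VBP}} = \mathscr{P}_{\mathrm{VBP}}^{-1}\mathscr{A}$, so the two matrices share their minimal polynomial. I will therefore write
\[
\mathscr{M}:=\mathscr{A}\mathscr{P}_{\mathrm{VBP}}^{-1}=\begin{bmatrix}\Theta_1 & \Theta_2\\ 0 & I\end{bmatrix},
\]
and exhibit a polynomial of degree at most $n+1$ that annihilates $\mathscr{M}$.

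\textbf{Step 1 (triangular structure and a candidate polynomial).} Let $p_1(t)=\det(tI-\Theta_1)$ be the characteristic polynomial of $\Theta_1$, which has degree $n$; by Cayley--Hamilton, $p_1(\Theta_1)=0$. Since $\mathscr{M}$ is block upper triangular, any polynomial $q$ satisfies
\[
q(\mathscr{M})=\begin{bmatrix} q(\Theta_1) & \ast\\ 0 & q(1)I\end{bmatrix}.
\]
The candidate annihilating polynomial is $q(t)=(t-1)\,p_1(t)$, which has degree $n+1$.

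\textbf{Step 2 (verification).} Factor $q(\mathscr{M})=(\mathscr{M}-I)\,p_1(\mathscr{M})$.
\begin{itemize}
\item The first factor is
\[
\mathscr{M}-I=\begin{bmatrix}\Theta_1-I & \Theta_2\\ 0 & 0\end{bmatrix}.
\]
\item The second factor is
\[
p_1(\mathscr{M})=\begin{bmatrix} p_1(\Theta_1) & Z\\ 0 & p_1(1)I\end{bmatrix}=\begin{bmatrix} 0 & Z\\ 0 & p_1(1)I\end{bmatrix}
\]
for some block $Z$.
\item Multiplying the two factors gives
\[
(\mathscr{M}-I)\,p_1(\mathscr{M})=\begin{bmatrix} 0 & (\Theta_1-I)Z+p_1(1)\Theta_2\\ 0 & 0\end{bmatrix}.
\]
\end{itemize}
The remaining task is to show that the $(1,2)$ block vanishes. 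The cleanest route is to use the commuting order $q(\mathscr{M})=p_1(\mathscr{M})(\mathscr{M}-I)$ and compute $Z$ explicitly. For $\mathscr{M}$ of this shape, one has
\[
\mathscr{M}^k=\begin{bmatrix}\Theta_1^k & \sum_{j=0}^{k-1}\Theta_1^j\Theta_2\\ 0 & I\end{bmatrix}.
\]
Hence, for any polynomial $q$, the $(1,2)$ block of $q(\mathscr{M})$ is $s_q(\Theta_1)\Theta_2$, where
\[
s_q(t)=\frac{q(t)-q(1)}{t-1}.
\]
Applying this to $q(t)=(t-1)p_1(t)$, we have $q(1)=0$, so $s_q=p_1$. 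The $(1,2)$ block is therefore $p_1(\Theta_1)\Theta_2=0$. The diagonal blocks are $q(\Theta_1)=(\Theta_1-I)p_1(\Theta_1)=0$ and $q(1)I=0$. Thus $q(\mathscr{M})=0$.

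\textbf{Step 3 (conclusion).} The minimal polynomial of $\mathscr{M}$ divides $q$, so its degree is at most $n+1$. By similarity, the same bound holds for $\mathscr{P}_{\mathrm{VBP}}^{-1}\mathscr{A}$.

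\textbf{Main obstacle.} The main step is the off-diagonal block in Step 2, that is, the identity that the $(1,2)$ block of $q(\mathscr{M})$ equals $s_q(\Theta_1)\Theta_2$. I will establish it by induction on powers of $\mathscr{M}$ and then extend it by linearity to arbitrary polynomials. As a consequence of this argument, Krylov methods such as GMRES terminate in at most $n+1$ steps in exact arithmetic.
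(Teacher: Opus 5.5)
Your proof is correct and follows the paper's argument: you take the annihilating polynomial $(t-1)\det(tI-\Theta_1)$ and combine Cayley--Hamilton for $\Theta_1$ with the block-triangular form \eqref{Theta}. The paper handles the off-diagonal block more directly, via the factor order $p_1(\mathscr{M})(\mathscr{M}-I)$. There the zero second block row of $\mathscr{M}-I$ annihilates the unknown block $Z$ and leaves only $p_1(\Theta_1)\Theta_2=0$, so your divided-difference induction is valid but unnecessary. Your explicit passage through the similarity of $\mathscr{A}\mathscr{P}_{\mathrm{VBP}}^{-1}$ and $\mathscr{P}_{\mathrm{VBP}}^{-1}\mathscr{A}$ is more careful than the paper, which attributes the block form \eqref{Theta} directly to $\mathscr{P}_{\mathrm{VBP}}^{-1}\mathscr{A}$.
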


\begin{proof}
	From \eqref{Theta}, we recall that the preconditioned matrix admits the following block upper triangular representation
	\begin{equation}
	\mathscr{P}_{\mathrm{VBP}}^{-1} \mathscr{A}
	= 
	\begin{bmatrix}
	\Theta_1 & \Theta_2 \\
	0 & I 
	\end{bmatrix},
	\label{eq:precond_block}
	\end{equation}
	where $I \in \mathbb{R}^{n \times n}$ is the identity matrix, $\Theta_1 \in \mathbb{R}^{n \times n}$, and $\Theta_2 \in \mathbb{R}^{n \times n}$ are given by
	\begin{align}
	\Theta_1 &= (T + WT^{-1}W)(T+\alpha I)^{-1}, \nonumber \\
	\Theta_2 &=\gamma (T + W T^{-1} W)(T + \alpha I)^{-1} W-WT^{-1}.
	\label{eq:theta_def}
	\end{align}
	
	Let $\lambda_1, \lambda_2, \ldots, \lambda_n$ denote the eigenvalues of the matrix $\Theta_1 \in \mathbb{R}^{n \times n}$. Since the matrix in \eqref{eq:precond_block} is block upper triangular, its spectrum is the union of the spectra of its diagonal blocks. Consequently, the eigenvalues of $\mathscr{P}_{\mathrm{VBP}}^{-1} \mathscr{A}$ consist of $\lambda_1, \ldots, \lambda_n$ (from the $(1,1)$ block $\Theta_1$) and $1$ with multiplicity $n$ (from the $(2,2)$ block $I$). Hence, the characteristic polynomial of $\mathscr{P}_{\mathrm{VBP}}^{-1} \mathscr{A}$ is given by
	\begin{equation}
	\Phi_{\mathscr{P}_{\mathrm{VBP}}^{-1} \mathscr{A}}(\lambda)
	= \det\left( \mathscr{P}_{\mathrm{VBP}}^{-1} \mathscr{A} - \lambda I \right)
	= \prod_{i=1}^n (\lambda - \lambda_i) (\lambda - 1)^n.
	\label{eq:char_poly}
	\end{equation}
	
	Now define the polynomial $\Psi(\lambda)$ of degree $n + 1$ as follows
	\begin{equation}
	\Psi(\lambda) := \left( \prod_{i=1}^n (\lambda - \lambda_i) \right) (\lambda - 1).
	\label{eq:Psi_def}
	\end{equation}
	We shall prove that $\Psi$ is an annihilating polynomial of the preconditioned matrix, i.e., $\Psi\left( \mathscr{P}_{\mathrm{VBP}}^{-1} \mathscr{A} \right) = 0$.
	
	Substituting the block form \eqref{eq:precond_block} into~\eqref{eq:Psi_def}, we obtain
	\begin{align}
	\Psi\left( \mathscr{P}_{\mathrm{VBP}}^{-1} \mathscr{A} \right)
	&= \left[ \prod_{i=1}^n \left( \mathscr{P}_{\mathrm{VBP}}^{-1} \mathscr{A} - \lambda_i I \right) \right] \left( \mathscr{P}_{\mathrm{VBP}}^{-1} \mathscr{A} - I \right) \nonumber \\
	&= \left[ \prod_{i=1}^n \begin{bmatrix} \Theta_1 - \lambda_i I & \Theta_2 \\ 0 & (1 - \lambda_i) I \end{bmatrix} \right] \begin{bmatrix} \Theta_1 - I & \Theta_2 \\ 0 & 0 \end{bmatrix}\\
	&=\begin{bmatrix} \prod_{i=1}^n (\Theta_1 - \lambda_i I)(\Theta_1-I) & \prod_{i=1}^n (\Theta_1 - \lambda_i I)\Theta_2 \\ 0 & 0 \end{bmatrix}
	 \label{eq:product_step}
	\end{align}
	
 According to the Cayley–Hamilton theorem, the matrix $\Theta_1$ satisfies its own characteristic equation, meaning $\prod_{i=1}^n (\Theta_1 - \lambda_i I) = 0$. Substituting this into \eqref{eq:product_step}, we obtain 
	\begin{equation}
	\Psi\left( \mathscr{P}_{\mathrm{VBP}}^{-1} \mathscr{A} \right)
	= \begin{bmatrix} 0 & 0 \\ 0 & 0 \end{bmatrix}.
	\end{equation}
	
	 Therefore, the degree of the minimal polynomial of the preconditioned matrix $\mathscr{P}_{\mathrm{VBP}}^{-1} \mathscr{A}$ is at most $n + 1$.
\end{proof}

\begin{remark}
	The theoretical results in Theorem \ref{thm:minimal_poly} determine the convergence behavior of a Krylov subspace method, such as GMRES \cite{Saadgmres}. Theorem \ref{thm:minimal_poly} shows that with the VBP preconditioner $\mathscr{P}_{\mathrm{VBP}}$, termination (in exact arithmetic) of the GMRES method will occur in at most $n + 1$ steps for any choice of the right hand side $\mathbf{d}$.
\end{remark}

\section{Parameter selection}\label{Sec4}
The overall efficiency of the $ \mathscr{P}_{\mathrm{VBP}} $ is highly sensitive to the selection of the two parameters $\alpha$ and $\gamma$. Therefore, to achieve optimal performance, it is crucial to establish an effective method for estimating these values within the VBP framework.

As indicated by \eqref{w1}, the deviation between the preconditioner $\mathscr{P}_{\mathrm{VBP}} $ and the  matrix $\mathscr{A}$ highlights the need for proper parameter tuning. Ideally, these parameters should be chosen to minimize this distance, ensuring that the preconditioner closely approximates the original coefficient matrix. Although several strategies for parameter selection have been explored in the literature (see, e.g., \cite{Bai2000, Benzi2016, Chen2015, Huang2014}), this study adopts the algebraic estimation technique proposed by Huang \cite{Huang2014} to determine the practical values of $\alpha$ and $\gamma$.
Based on Eq.~\eqref{eq:matrix_R}, we have $\mathscr{R}$ as follows
\begin{equation} \label{eq:matrix_R}
\mathscr{R}_{\mathrm{VBP}}  = 
\begin{bmatrix} 
\alpha I - \gamma W^2 & W - \gamma WT \\ 
0 & 0 
\end{bmatrix}.
\end{equation}
To determine the quasi-optimal parameters, we aim to minimize the overall distance between the preconditioner and the original matrix. Therefore, we introduce an objective function $f(\alpha, \gamma)$ which relies on the Frobenius norm of the matrix $\mathrm{R}$
\begin{equation} \label{eq:norm_F}
f(\alpha, \gamma) = \|\mathscr{R}_{\mathrm{VBP}}\|_F^2 = \operatorname{tr}(\mathscr{R}_{\mathrm{VBP}}^{T}\mathscr{R}_{\mathrm{VBP}}).
\end{equation}
By minimizing this objective function with respect to the involved parameters, we can analytically derive their quasi-optimal values. Consequently, we have
\begin{equation} \label{eq:trace_full}
\begin{aligned}
f(\alpha, \gamma)=\operatorname{tr}(\mathscr{R}_{\mathrm{VBP}}^{T}\mathscr{R}_{\mathrm{VBP}}) = &\ \alpha^2 n - 2\alpha\gamma \operatorname{tr}(W^2) + \gamma^2 \operatorname{tr}(W^4) \\
& + \operatorname{tr}(W^2) - 2\gamma \operatorname{tr}(W^2 T) + \gamma^2 \operatorname{tr}(W T^2 W).
\end{aligned}
\end{equation}
To obtain the quasi-optimal parameters in the VBP preconditioner, we regard the parameter $\gamma$ as a constant and analyze only the parameter $\alpha$. By minimizing the function $f(\alpha, \gamma)$, we get the quasi-optimal parameter $\alpha$ in the VBP preconditioner as follows

\begin{equation}\label{alphaVBP}
\alpha_{\text{VBP}}=\alpha_{\mathrm{\text{qopt}}} = \frac{\gamma \|W\|_F^2}{n}
\end{equation}

\section{Numerical Experiments}\label{Sec5}
In this section, we evaluate the numerical performance of the proposed $ \mathscr{P}_{\mathrm{VBP}} $ for~\eqref{orginal} through a series of comprehensive numerical experiments. All computations are carried out using \textsc{Matlab} (R2020a) on a Windows 10 system equipped with an Intel Core i5 processor running at 2.6 GHz and 8 GB of RAM.

In each experiment, the iterative process is initiated with a zero initial guess, i.e., $\mathbf{x}_0 = \mathbf{0}$. This approach ensures a consistent and reliable assessment of the convergence behavior and the overall effectiveness of the proposed preconditioner.

We compare the performance of the proposed preconditioner $\mathscr{P}_{\mathrm{VBP}}$ with three existing  preconditioners, namely $\mathscr{P}_{\mathrm{BS}}$~\cite{ZhangDai2017}, $\mathscr{P}_{\mathrm{VHSS}}$~\cite{ShenShi2018}, and $\mathscr{P}_{\mathrm{MBP}}$~\cite{BalaniHajarian2023}, whose formulations are given in \eqref{BS}, \eqref{VHSS}, and \eqref{MBP}, respectively.
 Proper parameter selection is essential for achieving fast convergence with the preconditioned FGMRES solver. 
 According to the parameter selection strategies proposed in~\cite{ShenShi2018}, the quasi-optimal parameter $\alpha$ for both the VHSS preconditioners is chosen as
\begin{equation}\label{alphaVHSS}
\alpha_{\text{VHSS}} = \sqrt{\lambda_{\min}(T) \, \lambda_{\max}(T)},
\end{equation}
where $\lambda_{\min}(T)$ and $\lambda_{\max}(T)$ denote the smallest and largest eigenvalues of the matrix $T$, respectively.

Following the approach introduced in~\cite{ZhangDai2017}, the parameter $\alpha_{\text{BS}}$ for the BS preconditioner is determined by
\begin{equation}\label{alphaBS}
\alpha_{\text{BS}} = \sqrt[4]{\frac{\text{tr}(T W^2 T)}{n}},
\end{equation}
where $\text{tr}(\cdot)$ denotes the trace operator and $n$ is the dimension of the matrices involved.

Furthermore, for the MBP preconditioner, the parameter $\alpha_{\text{MBP}}$ is set according to the formulation given in~\cite{BalaniHajarian2023} as
\begin{equation}\label{alphaMBP}
\alpha_{\text{MBP}} = \frac{\delta_{\max}^2 + \delta_{\min}^2 + 2}{2},
\end{equation}
where $\delta_{\max}$ and $\delta_{\min}$ are defined in~\cite{BalaniHajarian2023}.
 Since our proposed preconditioner is two-parameter based, we fix the parameter $\gamma = 1e-06$ throughout all numerical experiments. 

The flexible GMRES (FGMRES) method was applied to solve the preconditioned systems, allowing at most 1000 iterations and using a stopping tolerance of $10^{-6}$ based on the relative residual
\[
\text{RES}=\frac{\|\mathbf{d} - \mathcal{\tilde{A}}\mathbf{x}_k\|_2}{\|\mathbf{d}\|_2} \leq 10^{-6}.
\]
To measure solution quality, we record the relative error (ERR)
\[
\text{ERR} = \frac{\|\mathbf{x}_k - \mathbf{x}_*\|_2}{\|\mathbf{x}_*\|_2},
\]
where $\mathbf{x}_k$ is the computed solution after $k$ steps and $\mathbf{x}_*$ is the exact one.

The inner linear systems arising within the FGMRES framework are solved inexactly using the conjugate gradient (CG) method. For each inner iteration, the CG solver is employed with a relative tolerance of $10^{-2}$ and a maximum allowable number of iterations set to 500. To ensure statistical reliability, all reported CPU times (in seconds) and iteration counts are averaged over three independent runs.

\begin{example}[\cite{CaoRen2015,ZhangDai2017}]\label{EX1}
	As the first test problem, we consider the complex symmetric linear system of the form~\eqref{EQ1}, given by
	\begin{equation}\label{KW}
	\left[ \left( \mathbf{K} - (3 - \sqrt{3}) \omega^2 \mathbf{I} \right) + i \left( \mathbf{K} + (3 + \sqrt{3}) \tau^2 \mathbf{I} \right) \right]\mathbf{ x} =\mathbf{d},
	\end{equation}
	where $\tau$ and $\omega$ are positive parameters. The matrix $\mathbf{K} \in \mathbb{R}^{n \times n}$ represents the five-point centered difference approximation of the negative Laplacian operator subject to homogeneous Dirichlet boundary conditions on the unit square $[0,1] \times [0,1]$ with uniform mesh size $h = \frac{1}{m+1}$. Specifically, $\mathbf{K}$ admits the tensor-product form
	\begin{equation}
	\mathbf{K} = \mathbf{I} \otimes \mathbf{V}_m + \mathbf{V}_m \otimes \mathbf{I},
	\end{equation}
	with $\mathbf{V}_m = h^{-2} \, \text{tridiag}(-1, 2, -1) \in \mathbb{R}^{m \times m}$. Consequently, $\mathbf{K}$ is a block tridiagonal matrix of dimension $n = m^2$.
	
	In our numerical experiments, the right-hand side vector is chosen as $\mathbf{d} = (1 + i) \mathscr{A}*\textbf{ones}(2m^2, 1).$, and we set $\tau = 1$. Additionally, the linear system~\eqref{KW} is normalized by multiplying both sides by $h^2$. To facilitate the application of preconditioning techniques, we define the real matrices
	\begin{equation}
	W = h^2 \left( \mathbf{K} - (3 - \sqrt{3}) \omega^2 \mathbf{I} \right), \qquad
	T = h^2 \left( \mathbf{K} + (3 + \sqrt{3}) \tau^2 \mathbf{I} \right).
	\end{equation}
	
	From the spectral analysis presented in~\cite{LundBowers1992}, the eigenvalues of $W$ and $T$ are known to lie within the following intervals
	\begin{equation}
	\lambda(W) \in \left[ -\frac{(3 - \sqrt{3}) \omega^2}{(m+1)^2} + 4 \left( 1 - \cos \frac{\pi}{m+1} \right), -\frac{(3 - \sqrt{3}) \omega^2}{(m+1)^2} + 4 \left( 1 - \cos \frac{m \pi}{m+1} \right) \right],
	\end{equation}
	and
	\begin{equation}
	\lambda(T) \in \left[ \frac{(3 + \sqrt{3}) \tau^2}{(m+1)^2} + 4 \left( 1 - \cos \frac{\pi}{m+1} \right), \frac{(3 + \sqrt{3}) \tau^2}{(m+1)^2} + 4 \left( 1 - \cos \frac{m \pi}{m+1} \right) \right].
	\end{equation}
	
	From these eigenvalue bounds, it can be readily verified that the matrix $T$ is symmetric positive definite. Moreover, the matrix $W$ is symmetric indefinite provided that the parameter $\omega$ satisfies the following condition
	\begin{equation}
	2(m+1) \sqrt{\frac{1 - \cos \frac{\pi}{m+1}}{3 - \sqrt{3}}} < \omega < 2(m+1) \sqrt{\frac{1 - \cos \frac{m \pi}{m+1}}{3 - \sqrt{3}}}.
	\end{equation}
\end{example}

In our numerical experiments, the parameters are set to $\omega =10, 20$ and $\tau = 1$, which result in a symmetric indefinite matrix $W$. To evaluate the performance across different problem sizes, three grid levels with $m = 32$, $64$, and $128$ are considered. The quasi-optimal parameters for the preconditioners $\mathscr{P}_{\mathrm{BS}}$, $\mathscr{P}_{\mathrm{VHSS}}$, $\mathscr{P}_{\mathrm{MBP}}$, and $\mathscr{P}_{\mathrm{VBP}}$ are computed using the formulas given in \eqref{alphaBS}, \eqref{alphaVHSS}, \eqref{alphaMBP}, and \eqref{alphaVBP}, respectively. The corresponding parameter values are summarized in Tables~\ref{Tab1} and~\ref{Tab2}.

Figure~\ref{fig:eigenvalues} illustrates the eigenvalue distribution of the preconditioned matrices corresponding to the four preconditioners under investigation—namely $\mathscr{P}_{\text{BS}}$, $\mathscr{P}_{\text{VHSS}}$, $\mathscr{P}_{\text{MBP}}$, and the proposed preconditioner $\mathscr{P}_{\text{VBP}}$. This figure is generated for a grid of size $16 \times 16$ using the quasi-optimal parameters for each preconditioner. For the proposed $\mathscr{P}_{\text{VBP}}$ preconditioner, the parameter $\gamma$ is set to $0.282$. As can be observed, all four preconditioners yield a relatively favorable spectral distribution, indicating that each method effectively improves the conditioning of the original coefficient matrix. However, it is evident that the eigenvalues for the proposed preconditioner $\mathscr{P}_{\text{VBP}}$ are significantly more tightly clustered around the point $(1, 0)$ in the complex plane compared to the other methods.

Furthermore, the eigenvalues associated with $\mathscr{P}_{\text{VBP}}$ lie entirely within the positive real interval $[1, 1.19]$, which is fully consistent with the theoretical result established in Theorem~\ref{th3}.

 \begin{figure}[!htp]
	\centering
	\includegraphics[scale=0.90]{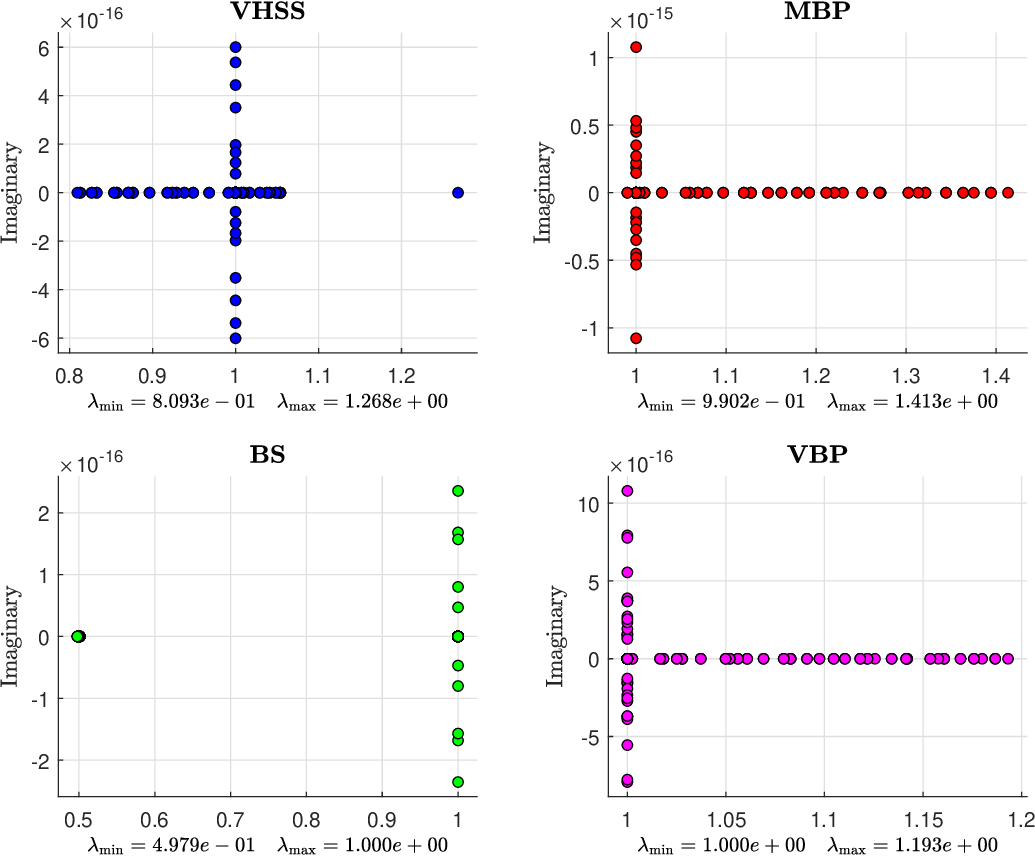}
	\caption{Eigenvalue distribution of the preconditioned matrices for the quasi-optimal parameters with $\tau = 1$ and $\omega = 10$ for Example~\ref{EX1}.}
	\label{fig:eigenvalues}
\end{figure}

 The numerical results presented in Tables~\ref{Tab3} and~\ref{Tab4} demonstrate the performance of FGMRES with four different preconditioners ($\mathscr{P}_{\text{BS}}$, $\mathscr{P}_{\text{VHSS}}$, $\mathscr{P}_{\text{MBP}}$, and $\mathscr{P}_{\text{VBP}}$) in terms of iteration counts, CPU times, and optimal parameter $\alpha$ for three grid sizes. These tables correspond to the parameter settings $(\tau = 1, \omega = 10)$ and $(\tau = 1, \omega = 20)$, respectively.
 
 The results clearly indicate that the proposed preconditioner $\mathscr{P}_{\text{VBP}}$ achieves the fastest convergence, requiring the fewest iterations and the shortest CPU time across all grid levels.

\begin{table}[!htbp]
	\centering
	\caption{Comparison of quasi-optimal parameter $\alpha$ values obtained by different preconditioning methods for Example~\ref{EX1} across various grid sizes with $ \tau=1 $ and $ \omega=10 $ .}
	\label{Tab1}
	\renewcommand{\arraystretch}{1.2}  
	\begin{tabular}{l|c|c|c|c}
		\toprule
		Method & $\alpha$ & $32\times 32$ & $64\times 64$ & $128\times 128$ \\
		\midrule
		$\mathscr{P}_{\text{BS}}$    & $\alpha_{\text{BS}}$   & 5.0245  & 5.0734  & 5.0892  \\
		$\mathscr{P}_{\text{VHSS}}$  & $\alpha_{\text{VHSS}}$ & 4.2350e-01 & 2.159e-01 & 1.0846e-01 \\
		$\mathscr{P}_{\text{MBP}}$   & $\alpha_{\text{MBP}}$  & 10.5836 & 10.5730 &10.5702 \\
		$\mathscr{P}_{\text{VBP}}$   & $\alpha_{\text{VBP}}$  & 1.8957e-5 & 1.9698e-05 & 1.9908e-05 \\
		\bottomrule
	\end{tabular}
\end{table}

\begin{table}[!htbp]
	\centering
	\caption{Comparison of optimal parameter $\alpha$ values obtained by different preconditioning methods for Example~\ref{EX1} across various grid sizes with $ \tau=1 $ and $ \omega=20 $.}
	\label{Tab2}
	\renewcommand{\arraystretch}{1.2} 
	\begin{tabular}{l|c|c|c|c}
		\toprule
		Method & $\alpha$ & $32\times 32$ & $64\times 64$ & $128\times 128$ \\
		\midrule
		$\mathscr{P}_{\text{BS}}$    & $\alpha_{\text{BS}}$   & 4.8735  & 5.0351  & 5.0796  \\
		$\mathscr{P}_{\text{VHSS}}$  & $\alpha_{\text{VHSS}}$ & 4.2350e-01 & 2.150e-01 & 1.0846e-01 \\
		$\mathscr{P}_{\text{MBP}}$   & $\alpha_{\text{MBP}}$  & 199.5352 & 199.4467 & 199.3979 \\
		$\mathscr{P}_{\text{VBP}}$   & $\alpha_{\text{VBP}}$  & 1.6360e-5 & 1.8999e-05 & 1.9726e-05 \\
		\bottomrule
	\end{tabular}
\end{table}

\begin{figure}[!htp]
	\centering
	\includegraphics[scale=0.95]{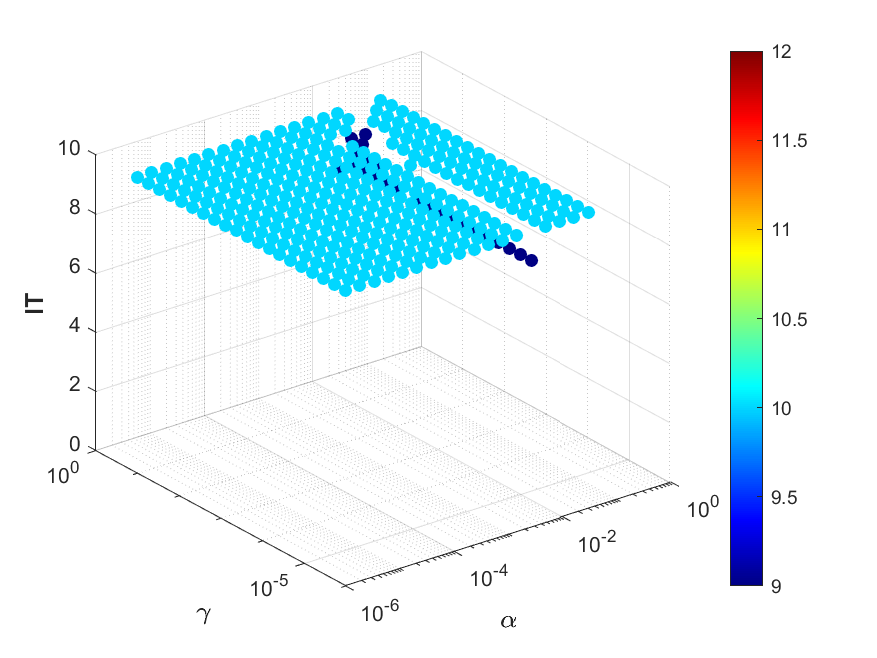}
	\caption{Iteration counts of the VBP preconditioned FGMRES method versus parameters $\alpha$ and $\gamma$ for Example~\ref{EX1} on a $32 \times 32$ grid with $ \tau=1 $ and $ \omega=10 $.}
	\label{fig:iter}
\end{figure}

\begin{table}[htbp]
	\centering
	\caption{Comparison of FGMRES performance using different preconditioning strategies with quasi-optimal parameter $\alpha$ for Example~\ref{EX1} with $ \tau=1 $ and $ \omega=10 $.}
	\label{Tab3}
	\setlength{\tabcolsep}{8pt}
	\renewcommand{\arraystretch}{1.4}
	\begin{tabular}{lccccccccc}
		\toprule
		\multirow{2}{*}{Grids} &
		\multicolumn{3}{c}{$32\times 32$} & 
		\multicolumn{3}{c}{$64\times 64$} & 
		\multicolumn{3}{c}{$128\times 128$} \\
		\cmidrule(lr){2-4} \cmidrule(lr){5-7} \cmidrule(lr){8-10}
		& IT & CPU & RES & IT & CPU & RES & IT & CPU & RES \\
		\midrule
		$\mathscr{P}_{\mathrm{BS}}$   & 29 & 0.26 & 8.3e-07 & 54 & 3.09 & 9.2e-07 & 103 & 38.43 & 8.8e-07 \\
		$\mathscr{P}_{\mathrm{VHSS}}$ & 15 & 0.19 & 6.1e-07 & 21 & 3.11 & 5.9e-07 & 41  & 32.66 & 7.4e-07 \\
		$\mathscr{P}_{\mathrm{MBP}}$  & 14 & 0.10 & 2.1e-07 & 13 & 0.46 & 7.0e-07 & 12  & 2.39  & 6.3e-07 \\
		$\mathscr{P}_{\mathrm{VBP}}$  & 10 & 0.05 & 2.4e-07 & 9  & 0.29 & 9.8e-07 & 9   & 1.41  & 5.5e-07 \\
		\bottomrule
	\end{tabular}
\end{table}
\begin{table}[htbp]
	\centering
	\caption{Comparison of FGMRES performance using different preconditioning strategies with quasi-optimal parameter $\alpha$ for Example~\ref{EX1} with $ \tau=1 $ and $ \omega=20 $.}
	\label{Tab4}
	\setlength{\tabcolsep}{8 pt}
	\renewcommand{\arraystretch}{1.4}
	\begin{tabular}{lccccccccc}
		\toprule
		\multirow{2}{*}{Grids} &
		\multicolumn{3}{c}{$32\times 32$} & 
		\multicolumn{3}{c}{$64\times 64$} & 
		\multicolumn{3}{c}{$128\times 128$} \\
		\cmidrule(lr){2-4} \cmidrule(lr){5-7} \cmidrule(lr){8-10}
		& IT & CPU & RES & IT & CPU & RES & IT & CPU & RES \\
		\midrule
		$\mathscr{P}_{\mathrm{BS}}$   & 22 & 0.16 & 4.8e-07 & 39 & 1.93 & 9.0e-07 & 72 & 25.18 & 9.0e-07 \\
		$\mathscr{P}_{\mathrm{VHSS}}$ & 16 & 0.12 & 7.9e-07 & 22 & 1.46 & 6.5e-07 & 29 & 21.33 & 9.2e-07 \\
		$\mathscr{P}_{\mathrm{MBP}}$  & 27 & 0.17 & 5.7e-07 & 25 & 0.96 & 9.9e-07 & 24 & 5.26  & 7.3e-07 \\
		$\mathscr{P}_{\mathrm{VBP}}$  & 14 & 0.06 & 7.9e-07 & 14 & 0.42 & 2.3e-07 & 13 & 2.00  & 9.0e-07 \\
		\bottomrule
	\end{tabular}
\end{table}
Figure~\ref{fig:iter} displays the iteration counts of the VBP-preconditioned FGMRES 
method versus parameters $ \alpha $ and $ \gamma $ for Example~\ref{EX1} on a $32 \times 32$ grid.
The figure clearly demonstrates the insensitivity of the method to parameter variations, as the iteration counts stay low across a wide parameter domain.

 \begin{example}[\cite{ZhangDai2017}]\label{EX2}
 	In this example, we consider the complex symmetric indefinite linear system of the form (3), given by
 	\begin{equation}\label{KCV}
 	\left[(-\omega^2 M + K) + i(\omega C_V + C_H)\right] \mathbf{x} = \mathbf{d},
 	\end{equation}
 	Here, $M$ and $K$ are the inertia and stiffness matrices, respectively; $C_V$ and $C_H$ are the viscous and hysteretic damping matrices, respectively; and $\omega$ is the driving circular frequency. In our numerical experiments, we set $M = I$, $C_V = 10I$, and $C_H = \tau K$ with $\tau = 10$. The matrix $K$ is constructed in the same manner as described in Example~\ref{EX1}. Furthermore, we choose the right-hand side vector $\mathbf{d}$ is selected as $\mathbf{d} = (1 + i)\mathscr{A}\textbf{Ones}(2m^2,1)$. Finally, the complex linear system~\eqref{KCV} is normalized by multiplying both sides by $h^2$, consistent with the previous example.
 \end{example}

In the present numerical study, we adopt the parameter values $\omega =10, 20$ and $\tau = 10$, leading to a symmetric indefinite coefficient matrix $W$. To investigate the scalability of the proposed approach, we perform simulations on three distinct grid resolutions corresponding to $m = 32$, $64$, and $128$. For each of the preconditioners $\mathscr{P}_{\mathrm{BS}}$, $\mathscr{P}_{\mathrm{VHSS}}$, $\mathscr{P}_{\mathrm{MBP}}$, and $\mathscr{P}_{\mathrm{VBP}}$, the quasi-optimal parameters are determined via the respective formulas in \eqref{alphaVHSS},\eqref{alphaBS},  \eqref{alphaMBP}, and \eqref{alphaVBP}. The computed parameter values are then reported in Tables~\ref{Tab5} and~\ref{Tab6}.

Figure~\ref{fig1:eigenvalues} displays the spectral distribution of the preconditioned matrices associated with the four preconditioners considered in this study, namely $\mathscr{P}_{\text{BS}}$, $\mathscr{P}_{\text{VHSS}}$, $\mathscr{P}_{\text{MBP}}$, and the proposed $\mathscr{P}_{\text{VBP}}$. The results are obtained on a $16 \times 16$ grid with the quasi-optimal parameters selected for each preconditioner. In the case of the proposed $\mathscr{P}_{\text{VBP}}$ preconditioner, the parameter $\gamma$ is fixed at $0.14$. It is observed that the $\mathscr{P}_{\text{BS}}$, $\mathscr{P}_{\text{VHSS}}$, and $\mathscr{P}_{\text{VBP}}$ preconditioners produce a reasonably well-behaved spectral distribution, suggesting that each is capable of improving the conditioning of the original system. Nevertheless, the eigenvalues of the proposed preconditioner $\mathscr{P}_{\text{VBP}}$ exhibit a noticeably denser clustering around $(1, 0)$ in the complex plane relative to the other methods, indicating a superior spectral property.

 \begin{figure}[!htp]
	\centering
	\includegraphics[scale=0.90]{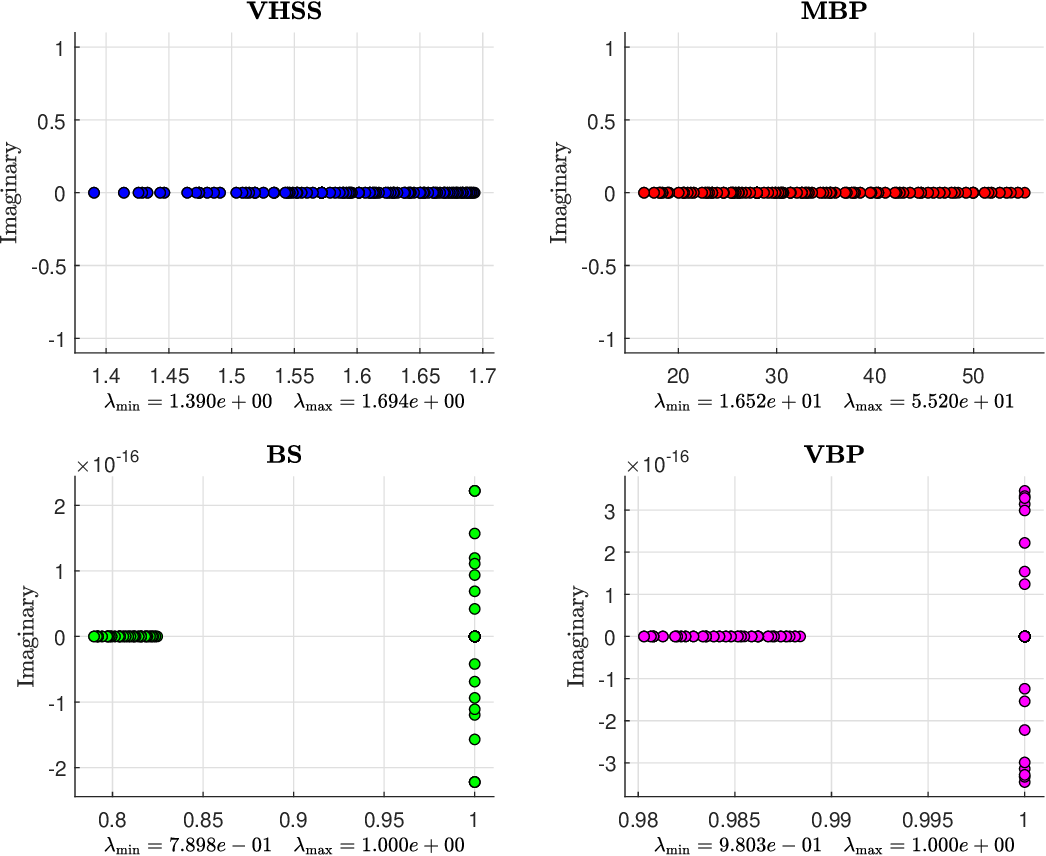}
\caption{Eigenvalue distribution of the preconditioned matrices for the quasi-optimal parameters with $\tau = 10$ and $\omega = 20$ for Example~\ref{EX2}.}
	\label{fig1:eigenvalues}
\end{figure}

\begin{table}[!htbp]
	\centering
	\caption{Comparison of quasi-optimal parameter $\alpha$ values obtained by different preconditioning methods for Example~\ref{EX2} across various grid sizes with $ \tau=10 $ and $ \omega=10 $ .}
		\renewcommand{\arraystretch}{1.5} 
	\label{Tab5}
	\renewcommand{\arraystretch}{1.2}  
	\begin{tabular}{l|c|c|c|c}
		\toprule
		Method & $\alpha$ & $32\times 32$ & $64\times 64$ & $128\times 128$ \\
		\midrule
		$ \mathscr{P}_{\text{BS}} $ & 	$\alpha_{\text{BS}}$&1.5929  &$ 1.6054 $&$ 1.6096 $\\
		$ \mathscr{P}_{\text{VHSS}} $ & 	$\alpha_{\text{VHSS}}$&4.6703 & $ 2.3725 $ &$ 1.1956 $\\
		$ \mathscr{P}_{\text{MBP}} $& $\alpha_{\text{MBP}}$&1.0365	& $ 1.0364 $ &$  1.0364 $\\
		$ \mathscr{P}_{\text{VBP}} $ & $\alpha_{\text{VBP}}$ &1.9149e-05& 1.9749e-05  & 1.9940e-05\\
		\toprule
	\end{tabular}
\end{table}

\begin{figure}[!htp]
	\centering
	\includegraphics[scale=0.95]{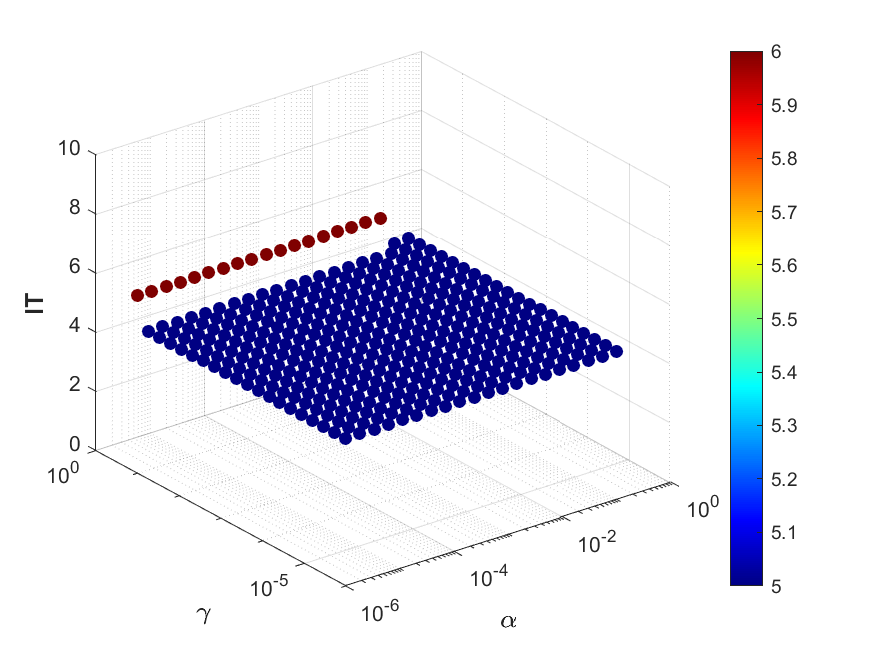}
	\caption{Iteration counts of the VBP preconditioned FGMRES method versus parameters $\alpha$ and $\gamma$ for Example~\ref{EX2} on a $32 \times 32$ grid with $ \tau=10 $ and $ \omega=20 $.}
	\label{fig:iter:Exam2}
\end{figure}

\begin{table}[!htbp]
	\centering
	\caption{Comparison of quasi-optimal parameter $\alpha$ values obtained by different preconditioning methods for Example~\ref{EX2} across various grid sizes with $ \tau=10 $ and $ \omega=20 $ .}
	\renewcommand{\arraystretch}{1.2} 
	\label{Tab6}
	\begin{tabular}{l|c|c|c|c}
		\toprule
		Method & $\alpha$ & $32\times 32$ & $64\times 64$ & $128\times 128$ \\
		\midrule
		$ \mathscr{P}_{\text{BS}} $ & 	$\alpha_{\text{BS}}$& 1.5565e+01   & 1.5962e+01 & 1.6673e+01 \\
		$ \mathscr{P}_{\text{VHSS}} $ & 	$\alpha_{\text{VHSS}}$&$ 5.4021 $ & $ 2.7430 $ &$ 1.3822 $\\
		$ \mathscr{P}_{\text{MBP}} $& $\alpha_{\text{MBP}}$&$ 1.4582 $	& $ 1.1.4579 $ &$  1.4578 $\\
		$ \mathscr{P}_{\text{VBP}} $ & $\alpha_{\text{VBP}}$ & 1.7071e-05 & 1.9189e-05  & 1.9770e-5\\
		\toprule
			\end{tabular}
\end{table}
Figure~\ref{fig:iter:Exam2} shows the IT counts of the VBP-preconditioned FGMRES versus parameters \(\alpha\) and \(\gamma\) for Example~\ref{EX2} on a \(32\times32\) grid.
This figure highlights the parameter insensitivity of the 
VBP preconditioner, as the iteration counts remain consistently low across 
a broad range of parameter values.

Tables~\ref{Tab7} and~\ref{Tab8} summarize the numerical performance of FGMRES when combined with four distinct preconditioners—namely $\mathscr{P}_{\text{BS}}$, $\mathscr{P}_{\text{VHSS}}$, $\mathscr{P}_{\text{MBP}}$, and the proposed $\mathscr{P}_{\text{VBP}}$—across three different grid sizes for Example~\ref{EX2}. The reported metrics include the number of iterations, the elapsed CPU time, and the optimal choice of the parameter $\alpha$. These two tables correspond to the parameter pairs $(\tau = 10, \omega = 10)$ and $(\tau = 10, \omega = 20)$, respectively.

Overall, the proposed $\mathscr{P}_{\text{VBP}}$ preconditioner consistently outperforms the other methods, yielding the lowest iteration counts and the least computational time for all tested grid resolutions.

\begin{table}[htbp]
	\centering
	\caption{Comparison of FGMRES performance using different preconditioning strategies with quasi-optimal parameter $\alpha$ for Example~\ref{EX2} with $ \tau=10 $ and $ \omega=10 $.}
	\label{Tab7}
	\setlength{\tabcolsep}{8pt}
	\renewcommand{\arraystretch}{1.4}
	\begin{tabular}{lccccccccc}
		\toprule
		\multirow{2}{*}{Grids} &
		\multicolumn{3}{c}{$32\times 32$} & 
		\multicolumn{3}{c}{$64\times 64$} & 
		\multicolumn{3}{c}{$128\times 128$} \\
		\cmidrule(lr){2-4} \cmidrule(lr){5-7} \cmidrule(lr){8-10}
		& IT & CPU & RES & IT & CPU & RES & IT & CPU & RES \\
		\midrule
		$\mathscr{P}_{\mathrm{BS}}$   & 24 & 0.14 & 7.6e-07 & 45 & 1.22 & 5.5e-07 & 85 & 20.22 & 7.6e-07 \\
		$\mathscr{P}_{\mathrm{VHSS}}$ & 14 & 0.07 & 1.6e-07 & 18 & 0.71 & 2.6e-07 & 24 & 5.45  & 2.8e-07 \\
		$\mathscr{P}_{\mathrm{MBP}}$  & 6  & 0.03 & 5.4e-07 & 7  & 0.21 & 4.2e-07 & 7  & 1.22  & 5.4e-07 \\
		$\mathscr{P}_{\mathrm{VBP}}$  & 5  & 0.02 & 3.1e-07 & 5  & 0.18 & 3.4e-08 & 4  & 0.82  & 6.4e-07 \\
		\bottomrule
	\end{tabular}
\end{table}

%
\begin{table}[htbp]
	\centering
	\caption{Comparison of FGMRES performance using different preconditioning strategies with quasi-optimal parameter $\alpha$ for Example~\ref{EX2} with $ \tau=10 $ and $ \omega=20 $.}
	\label{Tab8}
	\setlength{\tabcolsep}{8pt}
	\renewcommand{\arraystretch}{1.4}
	\begin{tabular}{lccccccccc}
		\toprule
		\multirow{2}{*}{Grids} &
		\multicolumn{3}{c}{$32\times 32$} & 
		\multicolumn{3}{c}{$64\times 64$} & 
		\multicolumn{3}{c}{$128\times 128$} \\
		\cmidrule(lr){2-4} \cmidrule(lr){5-7} \cmidrule(lr){8-10}
		& IT & CPU & RES & IT & CPU & RES & IT & CPU & RES \\
		\midrule
		$\mathscr{P}_{\mathrm{BS}}$   & 23 & 0.15 & 7.1e-07 & 42 & 1.44 & 9.0e-07 & 80 & 12.74 & 9.8e-07 \\
		$\mathscr{P}_{\mathrm{VHSS}}$ & 14 & 0.07 & 3.1e-07 & 18 & 0.69 & 4.6e-07 & 24 & 4.74  & 4.1e-07 \\
		$\mathscr{P}_{\mathrm{MBP}}$  & 7  & 0.04 & 2.8e-07 & 7  & 0.21 & 5.3e-07 & 7  & 1.10  & 4.7e-07 \\
		$\mathscr{P}_{\mathrm{VBP}}$  & 5  & 0.02 & 4.4e-07 & 5  & 0.16 & 7.0e-07 & 5  & 0.87  & 4.2e-07 \\
		\bottomrule
	\end{tabular}
\end{table}

\section{Conclusions}\label{Sec6}
In this paper, we have proposed a variant of the block preconditioner (VBP) based on a matrix splitting of the coefficient matrix. The convergence of the corresponding stationary iterative method was analyzed, and the spectral properties of the preconditioned matrix were investigated in detail. Numerical experiments confirm that the proposed preconditioner outperforms the other preconditioners considered in this study in terms of computational efficiency and robustness.

\section*{Conflicts of interest}
This work does not have any conflicts of interest.

\end{document}